\newtheorem{thm}{Theorem}[section]
\newtheorem{lem}[thm]{Lemma}
\newtheorem{prop}[thm]{Proposition}
\theoremstyle{definition}
\newtheorem{defn}[thm]{Definition}
\newtheorem{examp}[thm]{Example}
\theoremstyle{remark}
\newtheorem*{rem}{Remark}
\DeclareMathOperator{\tr}{tr}
\DeclareMathOperator{\sgn}{sgn}
\DeclareMathOperator{\cond}{cond}
\DeclareMathOperator{\disc}{disc}
\DeclareMathOperator{\Hom}{Hom}
\DeclareMathOperator{\Disc}{Disc}
\DeclareMathOperator{\Frob}{Frob}
\DeclareMathOperator{\Pic}{Pic}
\DeclareMathOperator{\Cl}{Cl}
\DeclareMathOperator{\Aut}{Aut}
\DeclareMathOperator{\Gal}{Gal}
\newcommand{\Discs}{\mathcal{D}iscs}
\newcommand{\<}{\left\langle}
\renewcommand{\>}{\right\rangle}
\newcommand{\GL}{\mathrm{GL}}
\newcommand{\SL}{\mathrm{SL}}
\newcommand{\CC}{\mathbb{C}}
\newcommand{\FF}{\mathbb{F}}
\newcommand{\QQ}{\mathbb{Q}}
\newcommand{\PP}{\mathbb{P}}
\newcommand{\ZZ}{\mathbb{Z}}
\renewcommand{\aa}{\mathfrak{a}}
\newcommand{\dd}{\mathfrak{d}}
\newcommand{\pp}{\mathfrak{p}}
\newcommand{\OO}{\mathcal{O}}
\newcommand{\cross}{\times}
\newcommand{\tensor}{\otimes}
\newcommand{\zmat}{$\ZZ$-mat}
\newcommand{\textand}{\quad \text{and} \quad}
\renewcommand{\to}{\mathop{\rightarrow}\limits}
\newcommand{\intsec}{\cap}
\newcommand{\union}{\cup}
\newcommand{\ignore}[1]{}
\newcommand{\bbq}[8]{
\begin{minipage}{0.1\linewidth}
\xymatrix@!0{
& #5 \ar@{-}[rr]\ar@{-}[dd]
& & #6 \ar@{-}[dd]
\\
#1 \ar@{-}[ur]\ar@{-}[rr]\ar@{-}[dd]
& & #2 \ar@{-}[ur]\ar@{-}[dd]
\\
& #7 \ar@{-}[rr]
& & #8
\\
#3 \ar@{-}[rr]\ar@{-}[ur]
& & #4 \ar@{-}[ur]
}
\end{minipage}
}
\begin{document}

\title{A remarkable identity in class numbers of cubic rings}
\author{Evan O'Dorney}
\maketitle

\begin{abstract}
In 1997, Y.~Ohno empirically stumbled on an astoundingly simple identity relating the number of cubic rings $h(\Delta)$ of a given discriminant $\Delta$, over the integers, to the number of cubic rings $\hat h(\Delta)$ of discriminant $-27\Delta$ in which every element has trace divisible by $3$:
\begin{equation}\label{eq:abs main}
  \hat{h}(\Delta) = \begin{cases}
    3h(\Delta) & \text{if } \Delta > 0 \\
    h(\Delta) & \text{if } \Delta < 0,
  \end{cases}
\end{equation}
where in each case, rings are weighted by the reciprocal of their number of automorphisms. This allows the functional equations governing the analytic continuation of the Shintani zeta functions (the Dirichlet series built from the functions $h$ and $\hat h$) to be put in self-reflective form. In 1998, J.~Nakagawa verified \eqref{eq:abs main}. We present a new proof of \eqref{eq:abs main} that uses the main ingredients of Nakagawa's proof (binary cubic forms, recursions, and class field theory), as well as one of Bhargava's celebrated higher composition laws, while aiming to stay true to the stark elegance of the identity.
\end{abstract}

\section{Introduction}
\label{sec:intro}

Great progress has been made in recent years \cite{FThorne,BV} in analyzing statistics pertaining to cubic fields, ordered by discriminant. A basic analytic tool at one's disposal is the \emph{Shintani zeta functions}, a pair of Dirichlet series that encode the number of cubic \emph{rings} over $\ZZ$ of each nonzero discriminant:
\begin{align*}
  \zeta^+(s) &= \sum_{\substack{C/\ZZ\text{ cubic,} \\ \Disc C > 0}} \frac{(\Disc C)^{-s}}{\lvert\Aut C\rvert}
  \\
  \zeta^-(s) &= \sum_{\substack{C/\ZZ\text{ cubic,} \\ \Disc C < 0}} \frac{(-\Disc C)^{-s}}{\lvert\Aut C\rvert}.
\end{align*}
The division by the number of automorphisms is a standard trick in this discipline which ensures, among other things, that the relative weights of a ring and its subrings (some of which may be isomorphic) are in the proper ratio. Because almost all cubic fields (and rings) have trivial automorphism group, this factor has no effect in most analytic applications.

The Shintani zeta functions were introduced in 1972 by Shintani, who proved that they have meromorphic continuations to the whole complex plane satisfying a reflection formula of the form (see \cite{Nakagawa}, eq.~(0.1))
\begin{equation} \label{eq:fnl eq}
  \begin{bmatrix}
    \zeta^+(1-s) \\ \zeta^-(1-s)
  \end{bmatrix}
  = \begin{bmatrix}
    c_1(s) & c_2(s) \\ c_3(s) & c_4(s)
  \end{bmatrix}
  \begin{bmatrix}
    \hat\zeta^+(s) \\ \hat\zeta^-(s)
  \end{bmatrix}
\end{equation}
connecting them to two other Dirichlet series $\hat\zeta^+$ and $\hat\zeta^-$ (the $c_i$, which are certain elementary expressions involving the $\Gamma$ function, need not detain us). The functions $\hat\zeta^+$ and $\hat\zeta^-$ arise as follows. Call a cubic ring \emph{integer-matrix}, or \emph{$\ZZ$-mat} for short, if the trace of each of its elements is a multiple of $3$. (This name will be demystified in the next section.) The discriminant of such a ring is always divisible by $27$, making the scaling of the following Dirichlet series natural:
\begin{align*}
  \hat\zeta^+(s) &= 3^{3s} \sum_{\substack{C/\ZZ\text{ $\ZZ$-mat,} \\ \Disc C > 0}} \frac{(\Disc C)^{-s}}{\lvert\Aut C\rvert}
  \\
  \hat\zeta^-(s) &= 3^{3s} \sum_{\substack{C/\ZZ\text{ $\ZZ$-mat,} \\ \Disc C < 0}} \frac{(-\Disc C)^{-s}}{\lvert\Aut C\rvert}.
\end{align*}

Shintani's functional equation stood unimproved until 1997, when Y.~Ohno computed the first $200$ terms of each of the four zeta functions and conjectured that they are equal in pairs, up to a curiously sign-dependent scale factor:
\[
  \hat\zeta^+(s) = \zeta^-(s) \quad \text{and} \quad \hat\zeta^-(s) = 3\zeta^+(s).
\]
This implies that the Shintani zeta functions satisfy a self-reflective functional equation, just like the Riemann zeta function. This striking conjecture was verified by Nakagawa the following year. In purely algebraic form, it is the following, which will be the subject of this essay.

\begin{thm} \label{thm:main}
Let $h(\Delta)$ denote the number of cubic rings of discriminant $\Delta$, each weighted by the reciprocal of its number of automorphisms. Let $\hat{h}(\Delta)$ denote the number of \zmat{} cubic rings of discriminant $-27\Delta$, weighted in the same manner. Then for each integer $\Delta \neq 0$,
\begin{equation}\label{eq:main}
  \hat{h}(\Delta) = \begin{cases}
    3h(\Delta) & \text{if } \Delta > 0 \\
    h(\Delta) & \text{if } \Delta < 0.
  \end{cases}
\end{equation}
\end{thm}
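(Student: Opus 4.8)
The plan is to translate the identity into the language of binary cubic forms and then reduce it, via class field theory, to a reflection statement about $3$-torsion in class groups of quadratic orders. First I would invoke the Delone--Faddeev/Gan--Gross--Savin parametrization, which gives a discriminant-preserving bijection between isomorphism classes of cubic rings and $\GL_2(\ZZ)$-orbits of integral binary cubic forms $f = ax^3 + bx^2y + cxy^2 + dy^3$, under which $\lvert\Aut C\rvert$ equals the order of the stabilizer of $f$. The \zmat{} condition cuts out the sublattice of \emph{integer-matrix} forms $ax^3 + 3bx^2y + 3cxy^2 + dy^3$ (which is where the name comes from), and a short computation shows their discriminant is always divisible by $27$. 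Thus $h(\Delta)$ becomes the mass (orbit count weighted by $1/\lvert\Stab f\rvert$) of integral forms of discriminant $\Delta$, and $\hat h(\Delta)$ the mass of integer-matrix forms of discriminant $-27\Delta$. The two lattices are dual up to scaling, which is exactly the source of Shintani's functional equation; the content of \eqref{eq:main} is that the two masses agree up to the sign-dependent constant, which the formal functional equation does not provide.

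Second, I would organize both masses by the \emph{conductor}, i.e.\ the index of the cubic ring in its maximal overring (equivalently the content of the form), and set up a recursion expressing $h(\Delta)$ and $\hat h(\Delta)$ in terms of the masses of \emph{maximal} (primitive) forms at the discriminants $\Delta/m^2$. The aim of this step is purely multiplicative bookkeeping: to show that the two recursions have the same shape, so that \eqref{eq:main} for general $\Delta$ follows once it is known for fundamental discriminants. This is the recursion ingredient of Nakagawa's argument, and it is where the interaction between the integer-matrix condition and divisibility by $3$ must be tracked most carefully.

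Third---and this is the heart---I would apply Bhargava's higher composition law for binary cubic forms, the triply symmetric specialization of the composition law on $2\times 2\times 2$ cubes. For a quadratic order $S$ of discriminant $D$, regarded as the resolvent ring of the cubic ring, this identifies the $\SL_2(\ZZ)$-classes of projective binary cubic forms of discriminant $D$ with the $3$-torsion subgroup of its (narrow) class group $\Cl^{+}(S)[3]$. Passing from $\SL_2(\ZZ)$ to $\GL_2(\ZZ)$ and inserting the automorphism weights, both $h(\Delta)$ and $\hat h(\Delta)$ become simple expressions in the $3$-ranks of the class groups of the quadratic orders $S_\Delta$ and $S_{-27\Delta}$. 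Since $\QQ(\sqrt{-27\Delta}) = \QQ(\sqrt{-3\Delta})$, the identity \eqref{eq:main} then reads as a Scholz-type reflection comparing $\Cl^{+}(S_\Delta)[3]$ with $\Cl(S_{-27\Delta})[3]$, which I would prove by Kummer and genus theory over $\QQ(\sqrt\Delta)$ and $\QQ(\sqrt{-3\Delta})$; the factor $3$ when $\Delta>0$ against $1$ when $\Delta<0$ should emerge from the archimedean place and the units, i.e.\ from whether the governing quadratic field is real or imaginary.

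The main obstacle I anticipate is pinning this last constant down exactly and uniformly in $\Delta$. Scholz reflection only bounds the two $3$-ranks by one another up to an error of one, so turning the family of per-field inequalities into a clean global identity requires that the extra reflected classes, the reducible and degenerate forms, the conductor-$3$ comparison of $\Cl(S_{-27\Delta})$ with the maximal order, and the automorphism weights all conspire to yield precisely $3$ or $1$. Verifying this conspiracy---in particular marrying the integer-matrix lattice to the composition law at the prime $3$ and confirming that nothing leaks out at the base of the recursion---is the delicate part, and is where I expect the bulk of the work to lie.
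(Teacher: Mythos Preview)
Your plan is recognizably in the same spirit as the paper but diverges at the crucial juncture, and the divergence is exactly the obstacle you flag at the end.

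The paper treats the two sides \emph{asymmetrically}: Bhargava's triply symmetric cube law is applied only to $\hat h$, and class field theory (Hasse's conductor computation for $S_3$-sextics) only to $h$. The point is that Bhargava's law, applied to a $\ZZ$-mat cubic ring of discriminant $-27\Delta$, produces a self-balanced ideal triple in the quadratic order $\OO_\Delta$, \emph{not} in $\OO_{-27\Delta}$; the factor of $27$ is absorbed by the correspondence. Meanwhile, the Artin map attaches to a cubic field of discriminant $\Delta_0 d^2$ a character of conductor $d$ on $\Pic\OO_\Delta$. So both $h(\Delta)$ and $\hat h(\Delta)$ end up as sums over the \emph{same} group $\Pic\OO_\Delta$ (and its suborders), and the proof finishes by a direct, entirely local comparison of the two character sums prime by prime. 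No Scholz reflection, no cubic reciprocity, is invoked at the end; the factor $3$ versus $1$ comes out for free from $\lvert\OO_\Delta^\times/(\OO_\Delta^\times)^3\rvert$.

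By contrast, your plan places $h(\Delta)$ and $\hat h(\Delta)$ over \emph{different} quadratic algebras ($S_\Delta$ and $S_{-27\Delta}$) and then appeals to a Scholz-type identity to bridge them. The paper in fact explores exactly this symmetric route in an interlude and shows that already for $\Delta=\pm p^2q^2$ it forces you to prove instances of cubic reciprocity; for general $\Delta$ it would amount to re-deriving a substantial chunk of class field theory. So your worry that ``pinning this last constant down exactly and uniformly'' is the hard part is well founded: that is precisely the work the paper's asymmetric setup is designed to bypass. A second, smaller point: the recursion in the paper does not reduce all the way to fundamental discriminants, only to conductors that are cubefree; the remaining small prime powers are handled by the same local character-sum comparison, not by a separate base case.
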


Developments in number theory since 1998, specifically Bhargava's beautiful work in higher composition laws in the early 2000's, suggest revisiting this beautiful identity. (A \emph{higher composition law,} in Bhargava's parlance, is a parametrization of interesting algebraic objects by the orbits of an algebraic group action \cite{icm}; it need not be a group operation.) In particular, one of the main steps in Nakagawa's proof relates $\ZZ$-mat rings of discriminant $-27\Delta$ to ideals in orders of the quadratic algebra $\QQ(\sqrt{\Delta})$, and one of Bhargava's higher composition laws relates the same sort of objects. Can Bhargava's result be adapted as a replacement for Nakagawa's somewhat ad hoc computation? We answer this question affirmatively. We also find a simple recursive formula for $h(\Delta)$ or $\hat{h}(\Delta)$ valid when $\Delta$ has high prime power divisors (Theorem \ref{thm:recursion}). Finally, unlike Nakagawa, we treat the cases $\Delta > 0$ and $\Delta < 0$ simultaneously, enabling us to explain the factor of $3$ in the statement quite readily. It arises from the existence of a fundamental unit in $\QQ(\sqrt{\Delta})$, except when $\Delta$ is a square, in which case it arises from the extra automorphism of order $3$ belonging to cubic fields of square discriminant.

\begin{examp}
The simplest case of Theorem \ref{thm:main} is when $\Delta = 1$. There is just one cubic ring of discriminant $1$, namely $\ZZ \cross \ZZ \cross \ZZ$, and it has six automorphisms, so $h(1) = 1/6$. There is also just one $\ZZ$-mat ring of discriminant $-27$, namely $\ZZ[t]/(t^3-1)$, and it has a single nontrivial automorphism $t \mapsto t^2$, so $\hat h(1) = 1/2$, in accordance with the theorem.
\end{examp}

\subsection{Outline of the proof}
Our proof of Theorem \ref{thm:main} follows four main steps:
\begin{enumerate}
  \item \label{st:recur} Construct a recursion allowing one to reduce to the case where the prime powers dividing $\Delta$ are not too high (Section \ref{sec:recursions}).
  \item \label{st:Bha} Use Bhargava's theory of higher composition laws to relate cubic rings of discriminant $-27\Delta$ to certain ideals in orders of $\QQ(\sqrt{\Delta})$ (Section \ref{sec:Bhargava}).
  \item \label{st:CFT} Use class field theory to relate cubic fields of discriminant $\Delta$ to certain characters on the ideal group of the quadratic algebra $\QQ(\sqrt{\Delta})$ (Section \ref{sec:CFT}).
  \item \label{st:cubf} Combine the foregoing steps to prove the theorem (Section \ref{sec:finish}).
\end{enumerate}
The first three steps are completely independent, and we have chosen to order them in a manner that places the non-elementary material last. Each of the steps culminates in a theorem that has an analogue in Nakagawa's proof, though potentially with some conditions altered, or, in the case of step 1, a beautiful and apparently new recursive formula for $h(\Delta)$ and $\hat{h}(\Delta)$.

At first glance, the two sides of \eqref{eq:main} are analogous, even ``dual'' to each other. Indeed, the space $\QQ^4$ of rational binary cubic forms has a natural $\SL_2\QQ$-invariant skew form $aa' - \frac{1}{3}bb' + \frac{1}{3}cc' - dd'$, with respect to which the lattices of integral and \zmat{} cubic forms are mutually dual, and this duality was used by Shintani to establish the functional equation \eqref{eq:fnl eq} in \cite{Shintani}. By contrast, $h$ and $\hat h$ are treated asymmetrically in Nakagawa's proof and even more asymmetrically in the present one: we only apply class field theory to $h$ and Bhargava's parametrizations to $\hat h$, allowing us to minimize the amount of time spent treating the prime $3$ specially.

\section{Basic notions}
\label{sec:basic}

Let $A$ be a principal ideal domain (PID); quintessentially $A = \ZZ$, although we will also use $A = \ZZ_p$ in this paper. By an \emph{$n$-ic ring} over $A$ we will mean a commutative ring $C$ with unit which is isomorphic to $A^n$ as an $A$-module. Only quadratic ($n=2$) and cubic ($n=3$) rings concern us here.

The \emph{discriminant} $\Disc C$ of an $n$-ic ring is, as usual, the determinant of the trace pairing matrix $[\tr \alpha_i\alpha_j]_{i,j = 1}^n$, where $[\alpha_1, \cdots, \alpha_n]$ is any $A$-basis for $C$. It is well defined up to multiplication by the square of a unit in $A$, so if $A = \ZZ$, the discriminant is simply an integer, while if $A = \ZZ_p$, a discriminant is determined up to a finite list of possibilities by its valuation $v_p(\Disc C)$. A ring $C$ is called \emph{nondegenerate} if its discriminant is nonzero.

A classical theorem due to Stickelberger states that the discriminant of a number field, and hence of any finite-rank ring over $\ZZ$, is congruent to $0$ or $1$ modulo $4$. In the case of a cubic ring, we will soon give a direct proof. We mention Stickelberger's theorem here only to motivate the following definitions. Let
\[
  \Discs = (4\ZZ \setminus 0) \union (1 + 4\ZZ)
\]
be the set of all possible discriminants for a nondegenerate $\ZZ$-algebra. Note that there is exactly one quadratic $\ZZ$-algebra of each discriminant $\Delta \in \Discs$; we denote it by $\OO_\Delta$. Call $\Delta \in \Discs$ a \emph{fundamental discriminant} if $\Delta$ is not of the form $\Delta'k^2$, where $k > 1$ and $\Delta' \in \Discs$. The fundamental discriminants are exactly those $\Delta$ such that $\OO_\Delta$ is maximal (being either $\ZZ \cross \ZZ$ or the ring of integers of a quadratic field). A general $\Delta \in \Discs$ can be written uniquely in the form $\Delta_0 f^2$, where $f \geq 1$ and $\Delta_0$ is fundamental; we have an identification $\OO_\Delta \cong \ZZ + f\OO_{\Delta_0}$. 

Analogously, let $\Discs_p$ be the set of all possible discriminants for a nondegenerate $\ZZ_p$-algebra, namely
\[
  \Discs_p = \begin{cases}
    (4\ZZ_2 \setminus 0) \union (1 + 4\ZZ_2) & p = 2 \\
    \ZZ_p \setminus 0 & \text{otherwise.}
  \end{cases}
\]
Call $\Delta \in \Discs_p$ \emph{fundamental} if it is not $p^2$ times an element of $\Discs_p$. This is the same as requiring that the unique quadratic ring over $\ZZ_p$ of discriminant $\Delta$ be maximal. One computes that the fundamental $p$-adic discriminants are, for $p \neq 2$, those not divisible by $p^2$, and for $p = 2$, those congruent to $1$ (mod $4$) or to $8$ or $12$ (mod $16$).

If $K$ is a nondegenerate $\QQ$-algebra and $\OO_K$ is the integral closure of $\ZZ$ in $K$, then the \emph{splitting type} of a prime $p$ is the symbol $f_1^{e_1}\cdots f_r^{e_r},$ where the $f_i$ and $e_i$ are the degrees and ramification indices of the primes into which $p$ splits in $\OO_K$, or equivalently of the extensions of $\QQ_p$ into which the completed algebra $K_p$ splits. The splitting type may be defined uniformly regardless of whether $K$ itself is a field.

\section{Cubic rings and binary cubic forms}
\label{sec:rings forms}

The simplest means of studying cubic rings uses a very elementary parametrization by binary cubic forms. This parametrization was first stated over an arbitrary PID by Gross and Lucianovic (\cite{cubquat}), but the gist of it is quite old. It is often attributed to Delone and Faddeev \cite{DF}, but Delone and Faddeev themselves attribute the result to a 1914 paper of F.~W.~Levi in the preface to their book, and we will call it the Levi form in his honor. Bhargava (\cite{B2}, pp.~868--869) discovered an attractive coordinate-free formulation which we follow here.

\begin{thm} [\cite{cubquat}, Prop{.} 2.1] \label{thm:forms and rings}
Let $A$ be a PID. The association of a cubic ring $C$ to the map
\[
  \xi \mapsto 1 \wedge \xi \wedge \xi^2 : \quad C/A \to \Lambda^3 C
\]
defines a bijection between isomorphism classes of cubic rings over $A$ and orbits of binary cubic forms
\[
  \phi(x,y) = a x^3 + b x^2 y + c x y^2 + d y^3, \quad a,b,c,d \in A
\]
under the $\GL_2 A$-action
\begin{equation}\label{eq:twisted action}
  \left(\begin{bmatrix}
    p & q \\ r & s
  \end{bmatrix}
  .\ \phi\right)(x,y) = \frac{1}{p s - q r} \phi(px + ry, qx + sy).
\end{equation}
Moreover, the $A$-algebra automorphism group of $C$ is isomorphic to the stabilizer in $\GL_2 A$ of the corresponding form.
\end{thm}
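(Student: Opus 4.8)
The plan is to produce the inverse construction explicitly and then read off both the bijection and the automorphism statement from it.

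First I would handle the forward direction, which is essentially formal. Given a cubic ring $C$, the unit $1$ is a primitive vector: reducing modulo any maximal ideal $\pp \subset A$, the quotient $C/\pp C$ is a nonzero three-dimensional $A/\pp$-algebra, so its unit $\bar 1$ is nonzero, whence $1 \notin \pp C$. Thus $A\cdot 1$ is a direct summand and $C/A$ is free of rank $2$. The map $\xi \mapsto 1 \wedge \xi \wedge \xi^2$ descends to $C/A$: replacing $\xi$ by $\xi + a$ with $a \in A$ changes $1 \wedge \xi \wedge \xi^2$ only by wedge products containing a repeated factor or a factor in $A\cdot 1$, all of which vanish. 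The map is homogeneous of degree $3$, and writing $\xi = x\bar\omega + y\bar\theta$ in a basis of $C/A$ shows it is given by a genuine binary cubic form $\phi$ valued in the rank-one module $\Lambda^3 C$. Choosing bases of $C/A$ and $\Lambda^3 C$ turns $\phi$ into an element of $A^4$; since $\Lambda^3 C \cong A\cdot 1 \otimes \Lambda^2(C/A)$ scales by $\det g$ when $C/A$ is acted on by $g \in \GL_2 A$, a change of basis transforms $\phi$ by exactly the twisted action \eqref{eq:twisted action}. Hence $C \mapsto \phi$ is a well-defined map from isomorphism classes of cubic rings to $\GL_2 A$-orbits of forms.

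The heart of the argument is the inverse. After adjusting the lifts $\omega,\theta$ of a basis of $C/A$ by elements of $A$ so that $\omega\theta \in A\cdot 1$ (a normalization that kills the $\omega$- and $\theta$-components of $\omega\theta$), the multiplication table of $C$ is determined by the structure constants appearing in $\omega^2, \omega\theta, \theta^2$, and a direct expansion of $1 \wedge \xi \wedge \xi^2$ shows these are explicit $\ZZ$-polynomials in the coefficients $a,b,c,d$ of $\phi$ (the Delone--Faddeev formulas). This already yields injectivity and the automorphism statement: two rings giving the same orbit have, in suitable bases, literally equal forms, hence equal normalized tables, so the basis-matching module isomorphism is a ring isomorphism; and an $A$-algebra automorphism $\sigma$ of $C$ fixes $A\cdot 1$, hence induces $\bar\sigma \in \GL_2 A$. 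Because $\sigma$ acts on $\Lambda^3 C$ by $\det\bar\sigma$ and commutes with squaring, $\bar\sigma$ stabilizes $\phi$, and conversely every stabilizing $g$ preserves the table and so lifts to an algebra automorphism; this gives $\Aut C \cong \Stab_{\GL_2 A}(\phi)$.

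What remains, and is the one genuinely nontrivial point, is surjectivity: for arbitrary $a,b,c,d \in A$ the multiplication defined by the Delone--Faddeev table is commutative with unit $1$ (clear) and associative (not clear). Associativity reduces to finitely many polynomial identities in $a,b,c,d$ with integer coefficients, essentially the two relations $\omega^2\cdot\theta = \omega\cdot(\omega\theta)$ and $\omega\cdot\theta^2 = (\omega\theta)\cdot\theta$. I expect this to be the main obstacle, and I would dispatch it by a density argument rather than brute force: over the localization $\ZZ[a,b,c,d][a^{-1}]$ the element $\omega$ generates, so the ring is the manifestly associative monogenic ring $A[\omega]/(\text{monic cubic})$, forcing the associator polynomials to vanish on the dense open locus $\{a \neq 0\}$; being polynomials over the domain $\ZZ[a,b,c,d]$, they then vanish identically, hence under every specialization in $A$. (Alternatively one simply verifies the two identities by hand.) Finally I would check that the two constructions are mutually inverse by reconstructing $\phi$ from the table and recovering the original form, completing the bijection.
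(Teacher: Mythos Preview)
Your approach is essentially the paper's: both pass to a normal basis (lifts with $\omega\theta \in A$), read off $a,b,c,d$ from the expansion of $1 \wedge \xi \wedge \xi^2$, determine the remaining structure constants from these, and identify automorphisms with form-stabilizers via the uniqueness of the normal lift. The one substantive variation is in how associativity is handled. The paper expands $(\alpha^2)\beta = \alpha(\alpha\beta)$ and $\alpha(\beta^2) = (\alpha\beta)\beta$ directly; this short computation does double duty, simultaneously \emph{deriving} the three remaining constants ($\ell = -ac$, $m = -ad$, $n = -bd$ in the paper's notation) and showing that no further constraints arise, so that every $(a,b,c,d)$ gives a ring. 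Your density argument (associativity holds on the monogenic locus $\{a \neq 0\}$, hence the associator polynomials vanish identically in $\ZZ[a,b,c,d]$) is a valid and pleasant alternative for the surjectivity step. Do note a small imprecision in your exposition, though: the expansion of $1 \wedge \xi \wedge \xi^2$ by itself only recovers four of the seven structure constants in the normalized table, so when you say it ``shows these are explicit $\ZZ$-polynomials in $a,b,c,d$'' you are already tacitly invoking associativity of $C$ to pin down the other three. That is fine for the injectivity direction, since $C$ is given as a ring, but you should make the dependence explicit rather than attribute it to the wedge computation alone.
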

\begin{proof}
Note that for $\xi \in C$ and $n \in A$, we formally have
\[
  1 \wedge (\xi+n) \wedge (\xi+n)^2 = 1 \wedge \xi \wedge \xi^2,
\]
so $\phi_C(x) = 1 \wedge \xi \wedge \xi^2$ really does define a cubic map from $C/A$ to $\Lambda^3 C$. If we pick a basis $[\bar{\alpha}, \bar{\beta}]$ for $C/A$ lifting to some basis $[1,\alpha,\beta]$ of $C$, then $\Lambda^3 C$ acquires a distinguished generator $1 \wedge \alpha \wedge \beta$ and $\phi : A^2 \to A$ becomes a cubic form.

As a preliminary claim, let us show that every cubic form $\phi$ arises from exactly one cubic ring $C$ with distinguished basis $[\bar{\alpha}, \bar{\beta}]$ for $C/A$ in this way. First note that the selection of basis $[\bar{\alpha}, \bar{\beta}]$ is tantamount to a selection of a \emph{normal basis} for $C$, that is, a basis $[1,\alpha,\beta]$ such that $\alpha\beta \in A$: if $\alpha'$, $\beta'$ are any lifts of $\bar{\alpha}$ and $\bar{\beta}$, then
\[
  \alpha'\beta' = t + u\alpha' + v\beta' \quad (t,u,v \in A),
\]
and $[1,\alpha' - v, \beta' - u]$ is the unique such basis.

Now write the multiplication table of $C$, still undetermined, in terms of this basis:
\begin{align*}
  \alpha^2 &= \ell - b\alpha + a\beta \\
  \alpha\beta &= m \\
  \beta^2 &= n - d\alpha + c\beta,
\end{align*}
where the signs and letters will be motivated momentarily. We compute
\begin{align*}
  \phi(x,y) &= 1 \wedge (\alpha x + \beta y) \wedge (\alpha x + \beta y)^2 \\
  &= 1 \wedge (\alpha x + \beta y) \wedge [(\ell - b\alpha + a\beta)x^2 + mxy + (n - d\alpha + c\beta)] \\
  &= (ax^3 + bx^2y + cxy^2 + dy^3)(1 \wedge \alpha \wedge \beta).
\end{align*}
So the cubic form $\phi$ exactly carries the information of the four coefficients $a$, $b$, $c$, and $d$.
Expanding out the associative laws $(\alpha^2)\beta = \alpha(\alpha\beta)$ and $(\alpha\beta)\beta = \alpha(\beta^2)$ shows that the conditions for this multiplication table to define a ring are
\[
  \ell = -ac, \quad m = -ad, \quad n = -bd.
\]
In particular, each choice of $a$, $b$, $c$, and $d$ yields precisely one ring structure, showing the preliminary claim.

Switching to a different basis $[\bar\alpha', \bar\beta'] = [p\alpha + q\beta, r\alpha + s\beta]$ multiplies the distinguished generator of $\Lambda^3 C$ by the determinant $ps - qr$ and thus changes the form $\phi$ in the manner indicated in \eqref{eq:twisted action}. This proves the bijection between cubic rings and $\GL_2 A$-orbits of cubic forms.

An $A$-algebra automorphism $\sigma$ of the ring $C$ clearly induces an automorphism of the module $C/A$ such that the cubic forms induced by bases $[\bar\alpha,\bar\beta]$ and $[\sigma(\bar\alpha),\sigma(\bar\beta)]$ are the same (for any fixed basis $[\bar\alpha, \bar\beta]$. Conversely, if some $\sigma : C/A \to C/A$ has this property, it arises from a unique automorphism of $R$, namely the linear map that sends the normal basis lifting $[\bar\alpha,\bar\beta]$ to the normal basis lifting $[\sigma(\bar\alpha),\sigma(\bar\beta)]$. This establishes a bijection between the automorphism groups, which is easily seen to be a group isomorphism.
\end{proof}

We will have occasion to use the Levi form $\phi$ in many contexts: sometimes as a coordinate-free map $\phi : C/A \to \Lambda^3 C$, sometimes in a specific basis as a polynomial $\phi : A^2 \to A$. Sometimes we will be plugging an element of $C/A$ into $\phi$, but treating the output as a number in $A$; this requires one to choose a generator $\omega_C$ of $\Lambda^3 C$, otherwise known as an \emph{orientation} on $C$, and we write
\[
  \phi(\xi) = \frac{1}{\omega_C} 1 \wedge \xi \wedge \xi^2.
\]

Happily enough, the Levi form corresponding to a monogenic ring $A[\xi]/(\xi^3 + b\xi^2 + c\xi + d)$ is simply the homogenized form
\[
  \phi(x,y) = x^3 + bx^2 y + cx y^2 + dy^3
\]
(take the normal basis $[1,\xi,\xi^2+b\xi+c]$). This leads to a quick proof of the identity that the discriminant of the ring $C$ corresponding to a form $\phi$ is the usual polynomial discriminant
\begin{equation}\label{eq:discriminant}
  \Disc \phi = b^2c^2 - 4ac^3 - 4b^3d - 27a^2d^2 + 18abcd,
\end{equation}
just by noting that both sides are homogeneous polynomials in $a$, $b$, $c$, and $d$ of degree $4$ that coincide when $a = 1$. Note that this immediately implies Stickelberger's theorem that (when $A = \ZZ$) $\Disc C \equiv 0,1$ mod $4$, since
\[
  \Disc \phi \equiv (bc - ad)^2 \equiv 0 \text{ or } 1 \mod 4.
\]

The Levi parametrization has one other beautiful property, mentioned by Davenport and Heilbronn (cf{.} \cite{DavII}, Lemma 11), who developed the Levi form in a different manner \cite{DavI}: if $C$ is a maximal cubic $\ZZ$-algebra, then for any prime $p \in A$, the splitting type of $C$ at $p$ is the same as the splitting type of $\phi$ modulo $p$. In other words, the prime ideals lying above $p$ in $C$ can be put in bijection with the distinct linear factors of $\phi$ in such a way that the inertia and ramification indices, on the one hand, equal the degrees and multiplicities on the other. This can be proved using the fact that all maximal cubic $\ZZ_p$-algebras are monogenic, except $\ZZ_2 \cross \ZZ_2 \cross \ZZ_2$ which is directly seen to correspond to $\phi(x,y) = xy(x+y)$.

\subsection{\zmat{} rings}

Just as a quadratic form can be represented by a symmetric matrix, a binary cubic form $\phi$ can be represented by a triply symmetric cubical box
\begin{equation}\label{eq:triply symmetric box}
  \bbq{a}{b/3}{b/3}{c/3}{b/3}{c/3}{c/3}{d}
\end{equation}
that has integer entries exactly when $3|b$ and $3|c$, in which case we call $\phi$ an \emph{integer-matrix form}, or a \zmat{} form for short. It is not hard to see that this property is $\GL_2\ZZ$-invariant. The following proposition shows the link with \zmat{} rings as we previously defined them.

\begin{prop}\label{prop:zmat}
Let $C$ be a cubic ring. The following are equivalent:
\begin{enumerate}[$(a)$]
  \item \label{it:zmat} The cubic form corresponding to $C$ is \zmat{};
  \item \label{it:3|trace} The trace of every element of $C$ is a multiple of $3$;
  \item \label{it:Z+C0} $C = \ZZ \oplus C^0$, where $C^0 \subseteq C$ is the subgroup of elements having trace zero.
\end{enumerate}
\end{prop}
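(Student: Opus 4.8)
The plan is to reduce everything to one linear-algebra computation: the values taken by the trace form $\tr \colon C \to \ZZ$ on a normal basis. Fix a normal basis $[1, \alpha, \beta]$ as in the proof of Theorem~\ref{thm:forms and rings}, so that multiplication obeys $\alpha^2 = -ac - b\alpha + a\beta$, $\alpha\beta = -ad$, and $\beta^2 = -bd - d\alpha + c\beta$. Writing the matrices of multiplication-by-$\alpha$ and multiplication-by-$\beta$ in this basis and reading off their diagonals, I expect
\[
  \tr 1 = 3, \qquad \tr \alpha = -b, \qquad \tr \beta = c.
\]
Since the trace is additive, a general element $x \cdot 1 + y\alpha + z\beta$ (with $x,y,z \in \ZZ$) then has trace $3x - by + cz$.

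With these three values in hand, the equivalence $(a) \Leftrightarrow (b)$ is immediate. Condition $(b)$ asks that $3 \mid 3x - by + cz$ for all integers $x,y,z$; reducing mod $3$ and discarding the term $3x$, this holds if and only if $-by + cz \equiv 0 \pmod 3$ for all $y,z$, i.e.\ if and only if $3 \mid b$ and $3 \mid c$. But $3 \mid b$ and $3 \mid c$ is precisely the condition $(a)$ that the cubical box \eqref{eq:triply symmetric box} have integer entries.

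For $(b) \Leftrightarrow (c)$ I would work with $C^0 = \ker(\tr)$ directly. The intersection $\ZZ \cdot 1 \cap C^0$ is automatically zero, since $\tr(n \cdot 1) = 3n$ vanishes only for $n = 0$; thus the entire content of $(c)$ lies in the claim $\ZZ \cdot 1 + C^0 = C$. An element $\gamma$ belongs to this sum precisely when $\gamma - n \cdot 1 \in C^0$ for some integer $n$, that is, when $\tr \gamma = 3n$ is divisible by $3$. Hence $\ZZ \cdot 1 + C^0 = C$ if and only if every element of $C$ has trace divisible by $3$, which is $(b)$.

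This proposition is elementary, so I anticipate no serious obstacle; the only step demanding care is the sign bookkeeping in the two multiplication matrices (one must correctly identify which of their entries lie on the diagonal). It bears emphasizing that the trace is an intrinsic invariant of $C$, so although the computation is carried out in a chosen normal basis, the resulting divisibility conditions are basis-independent --- in harmony with the $\GL_2\ZZ$-invariance of the \zmat{} property already remarked upon before the proposition.
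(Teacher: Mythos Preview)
Your proof is correct and follows essentially the same approach as the paper: compute $\tr\alpha=-b$ and $\tr\beta=c$ from the normal-basis multiplication table to get $(a)\Leftrightarrow(b)$, and observe that $(b)\Leftrightarrow(c)$ is immediate from $\tr(n\cdot 1)=3n$. You spell out the direct-sum argument for $(b)\Leftrightarrow(c)$ in more detail than the paper (which calls it ``straightforward''), but the content is identical.
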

\begin{proof}
The implications $\eqref{it:3|trace} \Leftrightarrow \eqref{it:Z+C0}$ are straightforward. For $\eqref{it:zmat} \Leftrightarrow \eqref{it:3|trace}$, write the multiplication table of $C$ in terms of a normal basis:
\begin{align*}
  \alpha^2 &= -ac - b\alpha + a\beta \\
  \alpha\beta &= -ad \\
  \beta^2 &= -bd - d\alpha + c\beta.
\end{align*}
The trace of $\alpha$ may of course be computed by adding the coefficients of $x$ in $\alpha x$ for $x$ in the basis $[1,\alpha,\beta]$. Since $\alpha\cdot 1$ has no constant term and $\alpha\beta$ has no $\beta$ term, we get $\tr \alpha = -b$, and likewise $\tr \beta = c$. So the traces of all elements of $C$ are multiples of $3$ if and only if $3|b$ and $3|c$, i.e{.} the corresponding form is \zmat{}.
\end{proof}

\subsection{The maximal \zmat{} subring}

It is well known that every nondegenerate cubic ring $C$ sits in a unique maximal cubic ring, namely the integral closure of $\ZZ$ in the corresponding $\QQ$-algebra $K = C \tensor_\ZZ \QQ$. The corresponding theorem for \zmat{} rings is also true.

\newcommand{\zm}{{\ensuremath{\ZZ}\text{m}}}
\begin{prop} \label{prop:zmat max}
Let $C$ be a cubic ring. The family of \zmat{} rings lying in $C$ has a unique maximal element $C^\zm$ in which all others are contained.
\end{prop}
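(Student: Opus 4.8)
The plan is to show that the $\ZZ$-mat subrings of $C$, ordered by inclusion, are closed under passing to the subring they jointly generate; a directed family with this property and with integer-bounded indices has a unique smallest-index member, and that member then dominates all the others. Throughout I use the characterization of Proposition \ref{prop:zmat}: a cubic ring is $\ZZ$-mat exactly when the trace of each of its elements lies in $3\ZZ$. Assuming $C$ nondegenerate, every full-rank subring has finite index, so the indices $[C:C']$ as $C'$ ranges over $\ZZ$-mat subrings form a nonempty set of positive integers (nonempty because, for instance, $\ZZ+3C$ is always $\ZZ$-mat: an element $n+3c$ has trace $3n+3\tr c\in3\ZZ$).

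The heart of the matter is the following closure property, which I will call the Key Lemma: if $C_1,C_2\subseteq C$ are $\ZZ$-mat subrings, then the subring $C_1C_2$ they generate (the additive span of all products $xy$ with $x\in C_1$, $y\in C_2$) is again $\ZZ$-mat. That $C_1C_2$ is a finite-index subring is immediate from commutativity. By additivity of the trace, proving it $\ZZ$-mat reduces to the single congruence
\[
  \tr(xy)\equiv 0 \pmod 3 \qquad (x\in C_1,\ y\in C_2).
\]
Reducing modulo $3$ turns this into a statement about the three-dimensional commutative $\FF_3$-algebra $\bar C=C/3C$: writing $\bar\tr$ for its trace form and $\bar C_i$ for the image of $C_i$, the hypothesis that $C_i$ is $\ZZ$-mat says precisely that $\bar C_i$ is a subalgebra on which $\bar\tr$ vanishes, and we must show $\bar\tr(\bar C_1\bar C_2)=0$. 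Equivalently, for the symmetric associative pairing $B(u,v)=\bar\tr(uv)$, each $\bar C_i$ is a totally isotropic subalgebra and we must show $\bar C_1\perp\bar C_2$.

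I expect this last point to be the main obstacle, because two totally isotropic subspaces of a quadratic space need not be mutually orthogonal in general — it is the subalgebra structure that saves the day. The cleanest route I see is to prove that $\bar C$ has a \emph{unique} maximal $\bar\tr$-isotropic subalgebra $M$; then $\bar C_1,\bar C_2\subseteq M$ forces $\bar C_1\bar C_2\subseteq M\subseteq\ker\bar\tr$, as desired. This can be verified by running through the finitely many isomorphism types of $\bar C$ (equivalently, the splitting types of $C$ at $3$), using that $\bar\tr$ degenerates heavily in the non-\'etale cases: it vanishes identically when $\bar C$ is non-reduced with all residue fields $\FF_3$, and in the remaining types the $\bar\tr$-isotropic subalgebras are readily enumerated and seen to be nested. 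I will also look for a uniform argument through the radical of $B$, which is an ideal of $\bar C$, in order to sidestep the case check; but the case check is short in any event.

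Granting the Key Lemma, the proposition follows formally. Choose a $\ZZ$-mat subring $C^{\zm}$ of smallest possible index. For an arbitrary $\ZZ$-mat subring $C'$, the Key Lemma shows $C'\cdot C^{\zm}$ is $\ZZ$-mat and contains $C^{\zm}$, so $[C:C'\cdot C^{\zm}]\le[C:C^{\zm}]$; minimality forces equality, whence $C'\cdot C^{\zm}=C^{\zm}$ and therefore $C'\subseteq C^{\zm}$. Thus $C^{\zm}$ contains every $\ZZ$-mat subring, which yields at once its uniqueness and the asserted maximality. (One could instead localize at $3$ at the outset, since the $\ZZ$-mat condition is vacuous away from $3$, reducing everything to the same statement over $\ZZ_3$; but this localization is not logically necessary for the argument above, as the mod-$3$ reduction already isolates the only prime that matters.)
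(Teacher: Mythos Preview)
Your approach is correct in outline but differs substantively from the paper's. You argue existence non-constructively: the family of $\ZZ$-mat subrings is join-closed (your Key Lemma), and since every cubic subring of a cubic ring has finite index, a minimal-index member exists and dominates all others. The paper instead writes down the answer explicitly,
\[
  C^{\zm} = \{x \in C : x^3 \in \ZZ + 3C\},
\]
and verifies directly that this is a ring, is $\ZZ$-mat, and contains every $\ZZ$-mat subring. Reducing mod $3$, the paper's set is precisely $\{x \in \bar C : x^3 \in \FF_3\cdot 1\}$, which is exactly the unique maximal $\bar\tr$-isotropic subalgebra you were seeking --- so the paper's three-line verification is the ``uniform argument'' you hoped to find, bypassing your case analysis entirely. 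The mechanism is Cayley--Hamilton in characteristic $3$: if $\tr x=\tr x^2=0$ then the characteristic polynomial of $x$ on $\bar C$ is $t^3-n$, forcing $x^3\in\FF_3$; conversely $x^3=n$ makes $x-n$ nilpotent on $\bar C$, hence traceless.

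Two small corrections. First, your nondegeneracy hypothesis is unnecessary: a $\ZZ$-mat ring in $C$ is by definition a rank-$3$ subring, so its index in $C$ is automatically finite. Second, your remark that $\bar\tr$ ``vanishes identically when $\bar C$ is non-reduced with all residue fields $\FF_3$'' is false for $\bar C\cong\FF_3\times\FF_3[\epsilon]/(\epsilon^2)$, where $\tr(a,b+c\epsilon)=a+2b$; the maximal isotropic subalgebra there is the $2$-dimensional $\{(b,b+c\epsilon)\}$. This does not break your argument (the subalgebra is still unique), but the case split is not quite as you describe it.
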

\begin{proof}
A first guess would be to let $C^\zm$ be the set of elements of $C$ whose trace is divisible by $3$, but these do \emph{not} in general form a ring. Instead, let
\[
  C^\zm = \{x \in C | x^3 \in \ZZ + 3C\}.
\]
We verify the three desired properties:
\begin{enumerate}
  \item \emph{$C^\zm$ is a ring.} Clearly $C$ contains the integers and is closed under multiplication. If $x,y\in C$, then
  \[
    (x+y)^3 = x^3 + y^3 + 3(x^2y + xy^2) \in \ZZ + 3C
  \]
  so $x+y \in C^\zm$.
  \item \emph{$C^\zm$ is \zmat{}.} Given $x \in C$, pick $n \in \ZZ$ such that $x^3 \equiv n$ mod $3C$; then $(x - n)^3 \equiv 0$ mod $3C$. On the $\ZZ/3\ZZ$-module $C/3C$, the multiplier $x-n$ acts nilpotently and thus has trace zero. Thus $3|\tr(x-n)$, and thus $3|\tr x$.
  \item \emph{Any \zmat{} subring of $C$ lies in $C^\zm$.} If $x$ lies in a \zmat{} subring, then $3|\tr x$ and also $3|\tr x^2$. Thus the characteristic polynomial of $x$ modulo $3$ has the form $t^3 - n$, so $x^3 \equiv n$ mod $3C$ and hence $x \in C^\zm$. \qedhere
\end{enumerate}
\end{proof}
If $C$ is any nondegenerate \zmat{} ring, then there is a largest \zmat{} ring containing $C$, namely $C_0^{\zm}$, where $C_0$ is the maximal cubic ring containing $C$. We call $C_0^{\zm}$ a \emph{maximal \zmat{}} ring, to be distinguished from a \zmat{} maximal ring (that is, a maximal ring that is \zmat{}).

Although we have worked for convenience only over $\ZZ$, the foregoing theory of \zmat{} rings is applicable without change over $\ZZ_3$. (Of course, if $p \neq 3$, every cubic ring over $\ZZ_p$ is \zmat{}.)

\section{Reducing to the case that $D$ has no high prime powers}
\label{sec:recursions}
For the first section of our proof, we will tackle a step that occupies the last section of Nakagawa's treatment: eliminating all $D$ with high prime power factors by means of a recursion that expresses both $h(D)$ and $\hat{h}(D)$ in terms of simpler discriminants.

The main result of this section is as follows:
\begin{thm}\label{thm:recursion} \label{thm:subring recn} \label{thm:recn}
For all $D \in \Discs$ and all primes $p$,
\begin{align}
  h(p^6 D) &= h(p^4 D) + p \cdot \big( h(D) - h(D/p^2) \big) \label{eq:h recur} \\
  \hat h(p^6 D) &= \hat h(p^4 D) + p \cdot \big( \hat h(D) - \hat h(D/p^2) \big), \label{eq:hhat recur}
\end{align}
using the natural convention that $h(a) = \hat h(a) = 0$ for all $a \notin \Discs$.
\end{thm}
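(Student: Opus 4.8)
The plan is to work one prime at a time and to prove both recursions by explicit manipulation of binary cubic forms, so that the class-field-theoretic content of the problem never has to be invoked (this is what will keep Step~\ref{st:recur} elementary). Fix $p$. By Theorem \ref{thm:forms and rings} a cubic ring $C$ of discriminant $\Delta$ is the same datum as a $\GL_2\ZZ$-orbit of integral binary cubic forms $\phi$ of discriminant $\Delta$, with $\Aut C \cong \Stab_{\GL_2\ZZ}\phi$; thus $h(\Delta)$ is the mass (each orbit weighted by $1/|\Stab|$) of such orbits, and everything concerning $p$ is visible in $\phi$ modulo powers of $p$.

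The first and cleanest ingredient is the scaling map $\phi \mapsto p\phi$. Since $\Disc$ is homogeneous of degree $4$ in the coefficients $a,b,c,d$ of \eqref{eq:discriminant}, this multiplies the discriminant by $p^4$; since it is equivariant for the twisted action \eqref{eq:twisted action} (one checks $g.(p\phi) = p\,(g.\phi)$), it preserves stabilizers. Hence $\phi \mapsto p\phi$ is a mass-preserving bijection from all forms of discriminant $\Delta/p^4$ onto the forms of discriminant $\Delta$ whose content is divisible by $p$, giving
\[
  h(\Delta) = h^{\mathrm{prim}}(\Delta) + h(\Delta/p^4),
\]
where $h^{\mathrm{prim}}$ is the mass of $p$-\emph{primitive} forms (those with $\bar\phi \neq 0$ in $\FF_p[x,y]$), and $h(\Delta/p^4)=0$ when $\Delta/p^4\notin\Discs$. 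A $p$-primitive form can have discriminant divisible by a high power of $p$ only if $\bar\phi$ has a repeated linear factor, the exact power being governed by how this factor lifts $p$-adically; understanding that lifting is the technical heart.

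The second ingredient is the index-$p$ sub/overring correspondence. An index-$p$ subring of $C$ corresponds to one of the $p+1$ cosets $M$ of determinant-$p$ integral matrices (the standard $\GL_2\ZZ$-representatives of $T_p$) for which $\tfrac1p\phi\circ M$ is again integral, and passing to it raises the discriminant by $p^2$. The number $s_p(C)$ of such subrings is therefore a function of $\bar\phi$ alone: it equals $p+1$ when $\bar\phi=0$ (the imprimitive case, where every coset works), and otherwise counts the $\FF_p$-rational linear factors of $\bar\phi$ with appropriate multiplicity (so $3$ for split, $0$ for inert, and so on). Double-counting the index-$p$ pairs $C'\subset C$ with $\Disc C=\Delta$ gives $\sum_{\Disc C=\Delta}s_p(C)/|\Aut C| = \sum_{\Disc C'=p^2\Delta}o_p(C')/|\Aut C'|$, a relation between masses at $\Delta$ and $p^2\Delta$; feeding in the imprimitive contribution ($s_p=p+1$, identified with discriminant $\Delta/p^4$ by the scaling map) is exactly what will produce the coefficient $p$. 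Combining the scaling relation at the four discriminants $p^6D,p^4D,D,D/p^2$ with this double-count should cancel the primitive masses and the auxiliary term $h(p^2D)$, leaving \eqref{eq:h recur}; the recursion has length $4$ because $T_p$ acts through $\Sym^3$ of the standard representation (dimension $4$), and the period-$3$ shift of $\delta_j:=h(p^{2j}\cdot)-h(p^{2j-2}\cdot)$, namely $\delta_j=p\,\delta_{j-3}$, reflects the degree $3$ of the ring. The main obstacle is precisely this local bookkeeping: $s_p$ and $o_p$ genuinely depend on the reduction type of $\phi$ (split, $1\,2$, inert, and the ramified types $1^2 1$ and $1^3$), so one must verify that the stated combination is \emph{insensitive} to the type, i.e.\ that the contributions of the non-split reductions cancel between the high difference $h(p^6D)-h(p^4D)$ and the low difference $h(D)-h(D/p^2)$.

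Finally, \eqref{eq:hhat recur} follows by running the same argument on \zmat{} forms. For $p\neq 3$ every cubic $\ZZ_p$-ring is \zmat{} (the conditions $3\mid b,c$ of Proposition \ref{prop:zmat} being vacuous at $p$), so the local count is unchanged and \eqref{eq:hhat recur} is literally \eqref{eq:h recur}. Only $p=3$ requires separate care: there I would restrict the scaling map, the divisibility of content, and the determinant-$3$ operators to the sublattice of \zmat{} forms, using Propositions \ref{prop:zmat} and \ref{prop:zmat max} to see that this sublattice and its maximal \zmat{} members are preserved, and check that the degree-$4$ recursion survives. The shift by $-27$ in the definition of $\hat h$ only relabels which discriminant is being counted and does not affect the recursion.
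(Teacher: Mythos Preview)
Your proposal identifies two natural operations on binary cubic forms---the scaling $\phi\mapsto p\phi$ and the index-$p$ sub/over-ring (Hecke-type) correspondence---but stops short of the decisive step: actually combining them into the four-term recursion \eqref{eq:h recur}. You flag this yourself (``one must verify that the stated combination is insensitive to the type''), and that is exactly where the content lives. The double-count you write down relates \emph{weighted} masses $\sum s_p(C)/|\Aut C|$ and $\sum o_p(C')/|\Aut C'|$, with weights that genuinely depend on the reduction type of $\bar\phi$; there is no evident mechanism by which the scaling relation alone strips these weights to leave the unweighted identity $h(p^6D)-h(p^4D)=p\,(h(D)-h(D/p^2))$. (Incidentally, for primitive $\phi$ the count $s_p$ is the number of \emph{distinct} $\FF_p$-roots, not roots with multiplicity.) The remark that ``the recursion has length $4$ because $T_p$ acts through $\Sym^3$'' is an appealing heuristic, not an argument. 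Finally, the recursion must also hold at the boundary, i.e.\ when $D/p^2\notin\Discs$ or when $v_p(D)$ is small, and your plan does not touch this.

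The paper's route is different and more direct. It groups cubic rings of discriminant $p^{2n}\Disc C_1$ by their $p$-maximal (resp.\ $p$-maximal \zmat{}) overring $C_1$; the $1/|\Aut|$ weighting then turns the problem into a literal subring count, and one need only prove the purely local recursion $s_{n+3}=s_{n+2}+p(s_n-s_{n-1})$ for $s_n=\#\{\text{index-}p^n\text{ subrings of }C_1\otimes\ZZ_p\}$. This is done by parametrizing such subrings by pairs $(i,j)$ with $0\le i\le j$, $i+j=n$, together with a point of $\PP^1(\ZZ/p^{j-i}\ZZ)$ satisfying a congruence on the Levi form, and observing that the shift $(i,j)\mapsto(i+1,j+2)$ multiplies the contribution by $p$ away from a small boundary. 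The boundary corrections are governed by the number $r_m$ of projective roots of the Levi form modulo $p^m$, which is shown (via maximality and Hensel) to stabilize for $m\ge 2$; this also dispatches the delicate cases $n=-1,-2$ that arise when $C_1$ is totally ramified. The \zmat{} recursion at $p=3$ is handled by the same machinery with $C_1$ a maximal \zmat{} ring, plus a short case analysis according to whether $[C_0:C_1]$ is $3$ or $9$.
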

\begin{rem}\label{rem:recursion}If $D = D_0 f^2$ with $D_0$ fundamental, and if $p^3|f$ for some prime $p$, then this result lets us prove Theorem \ref{thm:main} in the case $\Delta = D$, given the cases $\Delta = D/p^2$, $D/p^6$, and $D/p^8$. Inducting on $|\Delta|$ (all cases with $\Delta \notin \ZZ$ being trivial) allows us to assume that $\Delta = \Delta_0 f^3$ with $\Delta_0$ fundamental and $f$ cubefree in Theorem \ref{thm:main}.
\end{rem}

\begin{proof}[Proof of Theorem \ref{thm:recursion}]
We prove more strongly that for each cubic algebra $C_0$ over $\ZZ$ that is maximal (resp{.} maximal \zmat{}) at $p$, the contributions to the left and right sides of \eqref{eq:h recur} (resp{.} \eqref{eq:hhat recur}) coming from subrings $C \subseteq C_1$ of $p$-power index are equal. Here is the first of many times that the $1/\lvert\Aut C\rvert$ weighting in Theorem \ref{thm:main} is to our advantage: since every automorphism of such a $C$ lifts to an automorphism of $C_1$, we have the identity
\[
   \frac{1}{\lvert\Aut C\rvert} = \frac{ \lvert\{C' \subseteq C_1 : C \cong C'\}\rvert}{\lvert\Aut C_1\rvert}
\]
and we can simply count subrings of $C_1$ without worrying whether they are isomorphic or have automorphisms. (If $C_1$ is \zmat{}, all its finite-index subrings will also be, by definition.)

The enumeration of subrings of a fixed ring is a local problem, and without further ado we will let $C_1$ denote a maximal (resp{.} maximal \zmat{}) nondegenerate cubic algebra over $\ZZ_p$ and $s_n$ the number of subrings of $C_1$ of index $p^n$. In particular $s_0 = 1$ and $\cdots = s_{-2} = s_{-1} = 0$. It suffices to prove the recursion
\begin{equation} \label{eq:subring recn}
  s_{n+3} = s_{n+2} + p\big(s_{n} - s_{n-1}\big)
\end{equation}
for all $n$ that are big enough for
\begin{equation}
  p^{2n} \Disc C_1, \quad \text{resp.} \quad {-\frac{1}{27}} \cdot p^{2n} \Disc C_1 
\end{equation}
to be a discriminant, that is, a $p$-adic integer congruent to $0$ or $1$ mod $4$ (the latter condition being vacuous unless $p = 2$). Clearly all $n \geq 0$ satisfy this condition; we will discover that $n=-1$ and $-2$ sometimes do, and $n \leq -3$ never does (thankfully, as \eqref{eq:subring recn} is clearly false for $n = -3$).

If $C \subseteq C_1$ is a subring of index $p^n$, then $C_1/C$ is a quotient group of $C/\ZZ_p \cong \ZZ_p \oplus \ZZ_p$ and thus has at most two elementary divisors. Write
\[
  C_1/C \cong \ZZ/p^i \ZZ \oplus \ZZ/p^j \ZZ
\]
where $0 \leq i \leq j$ are integers with $i + j = n$. Using this isomorphism, we get a normal basis $[1,\alpha,\beta]$ for $C_1$ such that $[1,p^i \alpha, p^j \beta]$ is a basis for $C$, manifestly also normal. One then computes that if
\[
  \phi_0(x,y) = a x^3 + b x^2 y + c x y^2 + d y^3
\]
is the cubic form attached to $C_1$ in the basis $[1,\alpha,\beta]$, then the corresponding cubic form attached to $C$ is
\begin{equation} \label{eq:subring phi}
  \phi_C(x,y) = a p^{2i - j} x^3 + b p^i x^2 y + c p^j x y^2 + d p^{2j - i}.
\end{equation}
In particular, if $2i \geq j$, then this form has integer coefficients and so $C$ will be a ring no matter what normal basis $[1, \alpha, \beta]$ we pick. Otherwise we must impose the condition that $a = \phi(\bar\alpha)$ is divisible by $p^{j-2i}$.

Of course, different normal bases $[1,\alpha, \beta]$, or equivalently, different bases $[\bar\alpha, \bar\beta]$ for the lattice $L_1 = C_1/\ZZ_p$, may yield the same ring $C$, which is determined by the lattice
\[
  L_C = p^{-i}(C/\ZZ_p) = \< \bar\alpha, p^{j-i}\bar\beta \> = p^{j-i}L_1 + \< \bar\alpha \>.
\]
In particular, the vector $\beta$ is immaterial, and $\alpha$ may range over all vectors of $L_1$ not divisible by $p$, up to translation by $p^{j-i}L_1$ and scaling by units. In other words, the parameter space for $\alpha$ is the finite projective line $\PP^1(\ZZ/p^{j-i}\ZZ)$, and $s_n$ is the total number of solutions to
\begin{equation}\label{eq:proj cong}
  \phi(x,y) \equiv 0 \mod p^{\max\{j-2i,0\}}
\end{equation}
for $[x : y] \in \PP^1(\ZZ/p^{j-i}\ZZ)$, where $(i,j)$ ranges over integer pairs with $0 \leq i \leq j$ and $i + j = n$.

The key point to note is that replacing $(i,j)$ with $(i+1,j+2)$ does not change the condition \eqref{eq:proj cong}, but gives us a projective line $\PP^1(\ZZ/p^{j-i}\ZZ)$ with $p$ points lying over every point that was there before. We get $s_n \approx ps_{n-3}$, subject to three corrective terms (compare Figure \ref{fig:ij}):
\begin{itemize}
  \item When $j = i+1$, $\PP^1(\ZZ/p\ZZ)$ has $p+1$ points instead of $p$, contributing an extra point for $n\geq 3$ odd;
  \item When $i = j$, the pair $(i,i)$ is inaccessible by this translation and contributes $1$ $(= |\PP^1(\ZZ/1\ZZ)|$) extra point for $n$ even;
  \item The pair $(i,j) = (0,n)$ is also inaccessible by this translation and contributes $r_n$ points, where $r_n$ is the number of solutions to $\phi(x,y) \equiv 0$ mod $p^n$ in $\PP^1(\ZZ/p^n\ZZ)$.
\end{itemize}
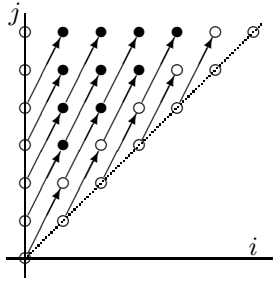
\begin{figure}
\[
\setlength{\unitlength}{0.5cm}
\begin{picture}(7,7.2)(-0.5,-0.5)
\put(-0.5,0){\line(1,0){7}}
\put(0,-0.5){\line(0,1){7}}
\dottedline{0.1}(0,0)(6.2,6.2)
\multiput(0,0)(0,1){7}{\circle{0.3}}
\multiput(1,1)(1,1){6}{\circle{0.3}}
\multiput(1,2)(1,1){5}{\circle{0.3}}
\multiput(1,3)(1,1){4}{\circle*{0.3}}
\multiput(1,4)(1,1){3}{\circle*{0.3}}
\multiput(1,5)(1,1){2}{\circle*{0.3}}
\multiput(1,6)(1,1){1}{\circle*{0.3}}
\multiput(0.075,0.15)(0,1){5}{\vector(1,2){0.85}}
\multiput(1.075,1.15)(0,1){4}{\vector(1,2){0.85}}
\multiput(2.075,2.15)(0,1){3}{\vector(1,2){0.85}}
\multiput(3.075,3.15)(0,1){2}{\vector(1,2){0.85}}
\multiput(4.075,4.15)(0,1){1}{\vector(1,2){0.85}}
\put(6,0.1){\makebox(0,0)[b]{$i$}}
\put(-0.1,6.5){\makebox(0,0)[r]{$j$}}
\end{picture}
\]
\caption{Valid $(i,j)$ pairs. Solid dots indicate where the contribution to the number of subrings can be computed by multiplying by $p$ the number coming from $(i-1,j-2)$.}
\label{fig:ij}
\end{figure}
Thus, for $n \geq 2$,
\[
  s_n = ps_{n-3} + 1 + r_n,
\]
and in particular, for $n \geq 0$,
\[
  s_{n+3} = s_{n+2} - p\big(s_{n} - s_{n-1}\big) + r_{n+3} - r_{n+2}.
\]
Thus proving the desired recursion \eqref{eq:subring recn} for $n \geq 0$ is equivalent to showing that $r_m$ is constant for $m \geq 2$. For large $m$ this follows from a suitably strong version of Hensel's lemma; in our situation, some remarkable circumstances converge to give the results for the $n$ that we desire.

We also have $s_2 = 1 + r_2$ and $s_1 = r_1$ by a direct determination of the $(i,j)$ pairs involved. Hence \begin{align}
  \eqref{eq:subring recn} \text{ holds for } n=-1 &\iff r_2 = r_1 - 1 \label{eq:n=1} \\
  \eqref{eq:subring recn} \text{ holds for } n=-2 &\iff r_1 = 1. \label{eq:n=0}
\end{align}

Suppose first that $C_1$ is maximal. Let
\[
  \phi_0(x,y) = ax^3 + bx^2y + cxy^2 + dy^3
\]
be its associated cubic form. Suppose that we are given a root of $\phi_0$ in $\PP^1(\ZZ/p^2\ZZ)$; choose our basis $[1,\alpha,\beta]$ of $C_1$ so that it is at $[1:0]$, so $p^2 | a$. If $p | b$, then applying the formula \eqref{eq:subring phi} with $i=-1$ and $j=0$ shows that $\<1, p^{-1}\alpha, \beta\>$ is a ring, contradicting the maximality of $C$. So $[1:0]$ is a simple root and thus has a unique lift mod all $p^m$ by Hensel's Lemma, proving \eqref{eq:subring recn} for $n \geq 0$.

The cases for $n < 0$ only pop up when $p|\Disc C_1$, that is, $C_1$ is ramified. This can happen either when $C_1 \cong \ZZ_p \cross \OO_{K_2}$, where $K_2$ is a ramified quadratic extension of $\ZZ_p$, or $C_1 = \OO_{K_3}$ where $K_3$ is a totally ramified cubic extension of $\ZZ_p$. But in the former case, $\Disc C_1 = \Disc K_2$ is fundamental, so we still only have to prove $n \geq 0$.

In the totally ramified cubic case, we have $\Disc C_1 \leq 5$ by the Dedekind-Hensel bound (which in general says that for $L/K$ an extension of local fields, $v_K(\Disc_K L) \leq e - 1 + e v_K(e)$ where $e$ is the ramification index). So $n \geq -2$. Mod $p$, $\phi_0$ has a single root of multiplicity $3$ (because the splitting type of $C$ is $1^3$); mod $p^2$, $\phi_0$ has no roots, or else $C_1$ would be non-maximal as was just shown. So \eqref{eq:n=1} and \eqref{eq:n=0} both hold, which shows \eqref{eq:subring recn} for $n = -1$ and $-2$.

This completes the proof of \eqref{eq:subring recn} for $C_1$ maximal, and thus also the proof of \eqref{eq:h recur}. There remains the case that $p=3$ and $C_1$ is the maximal \zmat{} subring in a maximal ring $C_0$ that is not \zmat{}. Note that we are now proving \eqref{eq:hhat recur}, so $n$ is governed by the stronger inequality
\[
  -\frac{1}{27} \cdot 3^{2n} \Disc C_1 \in \ZZ_3,
\]
that is,
\[
  2n - 3 + v_3(\Disc C_1) \geq 0.
\]
Note also that $[C_0 : C_1]$ is either $3$ or $9$ since $\ZZ + 3C_0 \subseteq C_1$.

Consider first the case that $[C_0 : C_1] = 9$, that is, $C_1 = \ZZ_3 + 3C_0$. Note that $C_0$ must be unramified since otherwise there is an element $\xi \notin \ZZ + 3C_0$ whose cube lies in $3C_0$, contradicting the construction of the maximal \zmat{} subring. So $p \nmid \Disc C_0$, yielding $v_3(\Disc C_1) = 4$ and $n \geq 0$. Now the form $\phi_0$ corresponding to $C_1$ is $3$ times the form $\phi_1$ corresponding to $C_0$ (by \eqref{eq:subring phi} with $i = j = -1$) and so $r_m$ is simply $3$ times the number of roots of \[
  \phi_1(x,y) \equiv 0 \mod 3^{m-1}
\]
on $\PP^1(\ZZ/3^{m-1}\ZZ)$, which is constant for $m \geq 2$ by Hensel's lemma.

In the case $[C_0 : C_1] = 3$, the relationship between the corresponding forms $\phi_0$ and $\phi_1$ is governed by \eqref{eq:subring phi} with $i = -1$ and $j = 0$, so we can write
\[
  \phi_0(x,y) = 9a'x^3 + 3b'x^2y + 3c'xy^2 + dy^3 \textand \phi_1(x,y) = a'x^3 + b'x^2y + 3c'xy^2 + 3dy^3
\]
where $a',b',c',d \in \ZZ_3$. Note that $3\nmid d$ since $C_0$ is maximal. So the only root of $\phi_0$ mod $3$ is $[1:0]$, and the roots mod $3^m$ must be expressible in the form $[1:3y']$, $y' \in \ZZ/3^{m-1}\ZZ$. Note that
\[
  \phi_0(1,3y') \equiv 0 \mod 3^m  \iff  \phi_1(1,y') \equiv 3^{m-1}.
\]
Now $[0:1]$ is a root of $\phi_1$ mod $3$, with multiplicity exactly $2$: we have $3\nmid b'$ since $C_0$ is not \zmat{}. So there is a single simple root of the form $[1:y]$ modulo $3$. By Hensel's lemma, there is a single root of this form modulo all higher powers of $3$, yielding $3$ roots of $\phi_0$ modulo $3^m$ for all $m \geq 2$. This proves \eqref{eq:subring recn} for all $n \geq 0$, which is all that is needed: for we have shown that $C_0$ has splitting type $1^2 1$ and so $v_3(\disc C_1) = 3$.
\end{proof}

\begin{rem}\label{rem:subrings}
This proof also shows that, if $C_1$ is maximal, the initial terms $s_1$, $s_2$ of the recursion can be computed using only the splitting type $\sigma$ of $\phi_0$ at $p$: $s_1$ is the number of roots mod $p$, and $s_2$ is $1$ plus the number of \emph{simple} roots mod $p$, as these are the only ones that lift to mod $p^2$. The values of these numbers are tabulated below for future reference.
\begin{equation} \label{eq:subring table}
  \begin{tabular}{c|ccccc}
    $\sigma$ & $111$ & $12$ & $3$ & $1^2 1$ & $1^3$ \\ \hline
    $s_1$ & $3$ & $1$ & $0$ & $2$ & $1$ \\ \hline
    $s_2$ & $4$ & $2$ & $1$ & $2$ & $1$
  \end{tabular}
\end{equation}
Together with $s_0 = 1$ and $s_{-1} = 0$, they enable the computation of the number of subrings of any index of a maximal cubic ring over $\ZZ_p$ (or, indeed, over $\ZZ$).
\end{rem}

Incidentally, the recursion \eqref{eq:subring recn} can be solved explicitly to get a formula
\[
  s_n = \frac{
    p^{\lfloor \frac{n+3}{3} \rfloor} - 1
    + \left(s_1 - 1\right)\left(p^{\lfloor \frac{n+2}{3} \rfloor} - 1\right)
    + \left(s_2 - s_1\right)\left(p^{\lfloor \frac{n+1}{3} \rfloor} - 1\right)
  }{p-1}
\]
(cf{.} \cite{Nakagawa}, Lemma 3.7), but this will be less useful to us.

\section{$\hat{h}$ and self-balanced ideals}
\label{sec:Bhargava}
Many readers will no doubt have seen Bhargava's dazzling reinterpretation of Gauss's $200$-year-old composition law on binary quadratic forms \cite{B1}: a cube
\[
  \bbq abcdefgh
\]
corresponds to a triple of quadratic forms whose Gauss composite is $0$, or more generally to three fractional ideals of a quadratic order that are ``balanced,'' meaning that their product is nearly the unit ideal in a suitably defined sense. Here, our focus is on the triply symmetric cubes
\[
  \bbq abbcbccd
\]
which we have already mentioned as the natural pictorial avatars of \zmat{} cubic forms. Due to the symmetry, these cubes correspond in Bhargava's bijection to ``balanced'' triples consisting of three ideals in the \emph{same} class; only this latter bijection need be described in detail here.

\begin{defn}
A \emph{self-balanced triple} is a triple $(\OO, I, \gamma)$, where $\OO$ is an order in a quadratic $\QQ$-algebra $K_2$, $I$ is a fractional ideal of $\OO$, and $\gamma \in K_2^\cross$ is a scalar, satisfying the two conditions
\begin{gather}
  \gamma I^3 \subseteq \OO \label{eq:balanced contain} \\
  \quad \lvert N(\gamma) \rvert \cdot N(I)^3 = 1. \label{eq:balanced norm}
\end{gather}
Also define an equivalence relation on self-balanced triples by
\[
  (\OO, I, \gamma) \sim (\OO, \lambda I, \lambda^{-3}\gamma)
\]
for every $\lambda \in K_2^\cross$. (It is immediate that the second triple is self-balanced if the first is.)
\end{defn}

Recall that an \emph{oriented} cubic ring $C$ is one with a distinguished generator $\omega_C \in \Lambda^3 C$, enabling us to view its Levi form $\phi : C/\ZZ \to \Lambda^3 C$ as taking values in $\ZZ$. A cubic ring $C$ can be oriented in two ways, which are isomorphic if and only if $C$ has an orientation-reversing automorphism; thus there are precisely $2\hat h(\Delta)$ \zmat{} cubic rings of discriminant $-27\Delta$, if we weight by the reciprocal of the number of \emph{oriented} automorphisms.

We are now ready to state the pertinent bijection.
\begin{thm}[cf{.} \cite{B1}, Theorem 13]\label{thm:Bha}
Oriented \zmat{} cubic rings $C$ of discriminant $-27\Delta \neq 0$ are in bijection with equivalence classes of self-balanced triples of the quadratic order $\OO_\Delta$ of discriminant $\Delta$. Also, those $C$ having a nontrivial oriented automorphism, necessarily of order $3$, correspond to those equivalence classes having a representative
\[
  (\OO_\Delta, \ZZ[\omega], \gamma),
\]
where $\ZZ[\omega]$ is the unit ideal in the ring generated by a primitive $3$rd root of unity (clearly $\Delta$ must be $-3$ times a square for this to happen).
\end{thm}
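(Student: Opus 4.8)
The plan is to deduce the bijection by running Bhargava's cube law on the symmetric locus, carrying the orientation along at every stage. First I would record how the orientation rigidifies the Levi correspondence of Theorem~\ref{thm:forms and rings}. An isomorphism of \emph{oriented} cubic rings must carry an oriented normal basis (one with $1\wedge\alpha\wedge\beta=\omega_C$) to another, and any two such bases differ by an element of $\SL_2\ZZ$, since determinant $+1$ is forced. Thus oriented $\ZZ$-mat cubic rings are classified by $\ZZ$-mat binary cubic forms up to $\SL_2\ZZ$, and the oriented automorphisms of $C$ are exactly $\Stab_{\SL_2\ZZ}(\phi)$ (the determinant-$+1$ part of the group in Theorem~\ref{thm:forms and rings}). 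Writing the $\ZZ$-mat form as $\phi=ax^3+3ex^2y+3fxy^2+dy^3$, I would identify it with the triply symmetric cube $A$ governed by $(a,e,f,d)$, under which the substitution action of $\SL_2\ZZ$ on $\phi$ becomes the diagonal action $g\mapsto g\otimes g\otimes g$ of $\SL_2\ZZ\hookrightarrow(\SL_2\ZZ)^3$ on $A$.

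Next I would feed $A$ into Bhargava's construction (\cite{B1}). The three coordinate slicings of $A$ yield binary quadratic forms $Q_i(x,y)=-\det(M_ix-N_iy)$; triple symmetry forces $Q_1=Q_2=Q_3=:Q$, a single form attached to $\phi$ (a unimodular twist of its Hessian). Bhargava's cube law attaches to $A$ an oriented quadratic ring $\OO$ of discriminant $\Disc Q$ together with a balanced triple of based ideals, and the symmetry collapses this triple to a single ideal $I$ carrying a scalar $\gamma$ that records the containment of $I^3$ in $\OO$; the conditions \eqref{eq:balanced contain}--\eqref{eq:balanced norm} are precisely the specialization of Bhargava's balancing conditions. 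The discriminant bookkeeping is then a one-line homogeneity computation in the spirit of \eqref{eq:discriminant}, giving $\Disc\phi=-27\,\Disc Q$; since $\Disc C=\Disc\phi=-27\Delta\neq0$, this forces $\Disc Q=\Delta$, hence $\OO=\OO_\Delta$, and this is exactly where the factor $-27$ enters. Running the construction backward, a self-balanced triple produces the balanced triple $(I,I,I)$, whose cube, built from a single basis of $I$ placed in all three slots, is automatically symmetric; so the correspondence is a genuine bijection once the equivalence relations are aligned.

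Aligning the equivalences is where I expect the real work to lie. On the form side the residual freedom is the diagonal $\SL_2\ZZ$; on the triple side it is the rescaling $(\OO,I,\gamma)\sim(\OO,\lambda I,\lambda^{-3}\gamma)$. I would verify that scaling the based ideal by $\lambda\in K_2^\cross$ corresponds to a scaling of $A$ that leaves its $\SL_2\ZZ$-orbit—and hence the oriented ring—unchanged, and conversely that $\SL_2\ZZ$-equivalent forms give triples related by some such $\lambda$. This must be carried out with Bhargava's based-ideal normalizations firmly in hand, keeping careful track of how the three equal slots interact with the single diagonal basis change; the exponent $-3$ on $\gamma$ is dictated by all three slots being scaled by the same $\lambda$. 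I regard this step as the main obstacle.

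Finally I would read off the automorphism statement. An automorphism of the equivalence class is a literal equality $(\OO_\Delta,\lambda I,\lambda^{-3}\gamma)=(\OO_\Delta,I,\gamma)$, that is, $\lambda I=I$ and $\lambda^3=1$: a nontrivial cube root of unity $\lambda=\omega$ lying in the multiplier ring of $I$. Since $\ZZ[\omega]$ is the maximal order of $K_2=\QQ(\sqrt{-3})$, this forces the multiplier ring to be $\ZZ[\omega]$ and, after rescaling, $I=\ZZ[\omega]$; the automorphism is then multiplication by $\omega$, visibly of order $3$, and $\Delta=\Disc\OO_\Delta$ is $-3$ times the square of the conductor, which explains the parenthetical remark. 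Conversely, the representative $(\OO_\Delta,\ZZ[\omega],\gamma)$ manifestly admits this order-$3$ symmetry. On the ring side the corresponding nontrivial element of $\Stab_{\SL_2\ZZ}(\phi)$ is forced to have order $3$, since the oriented automorphisms of a cubic ring form a subgroup of the cyclic group of order $3$ (the orientation-preserving part of $S_3$).
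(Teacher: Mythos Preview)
Your proposal is correct and follows Bhargava's original cube-law route from \cite{B1}, which the paper explicitly defers to in its opening sentence (``For a hands-on proof \ldots\ see \cite{B1}''). The paper instead presents a genuinely different, new proof via the Tartaglia--Cardano cubic formula: it picks a generic trace-zero element $\alpha\in C^0$, writes its characteristic polynomial $\alpha^3+3t\alpha+u=0$, and literally solves for $\alpha=\sqrt[3]{\gamma}+\sqrt[3]{\bar\gamma}$ inside the sextic algebra $K_6=K_2[\sqrt[3]{\gamma}]$, where $\gamma=\tfrac{1}{2}(-u+\phi(\alpha)\sqrt{\Delta})$. The ideal $I$ then appears directly as the coefficient lattice in the identification $C^0\cong\{\xi\sqrt[3]{\gamma}+\bar\xi\sqrt[3]{\bar\gamma}:\xi\in I\}$, and the self-balanced conditions are verified by hand from the multiplication law. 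Independence of the choice of $\alpha$ is handled not by tracking $\SL_2\ZZ$-orbits but by a trace-form trick: $\tfrac{1}{6}\tr(c(\xi)^2)=tN(\xi)$ forces any comparison map between two such $I$'s to be norm-preserving, hence a scaling.

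Your approach buys a clean conceptual link to the general cube law and makes the Hessian interpretation transparent; the paper's Cardano approach is more self-contained (no cube machinery needed), makes the quadratic resolvent $K_2$ appear as the natural home of the radicand, and gives an especially clean uniqueness argument. One small point: in your automorphism paragraph, the step from ``$\omega$ lies in the multiplier ring'' to ``$I=\ZZ[\omega]$ up to scaling'' silently uses that $\ZZ[\omega]$ is a PID, which the paper makes explicit.
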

\begin{proof}
For a hands-on proof (that also works when $\Disc C = 0$), see \cite{B1}. Here we present a new proof based on that most ancient nexus between quadratic and cubic number fields: the Tartaglia-Cardano cubic formula.

Let $C$ be a nondegenerate oriented \zmat{} cubic ring. By Proposition \ref{prop:zmat}\eqref{it:Z+C0}, $C = \ZZ \oplus C^0$, where $C^0$ is the sublattice of elements of trace $0$. Pick a generic element $\alpha \in C^0$; specifically, we should have that
\begin{itemize}
  \item $1 \wedge \alpha \wedge \alpha^2 \neq 0$, that is, $[1, \alpha, \alpha^2]$ is a $\QQ$-basis of $C \tensor \QQ$; and
  \item $\tr(\alpha^2) \neq 0$, for reasons that will soon be clear.
\end{itemize}
Using the nondegeneracy of $C$, these conditions are not hard to fulfill. They are also homogeneous, and there is no harm in taking $\alpha$ a \emph{primitive} element, that is, one such that $\QQ\alpha \intsec C = \ZZ\alpha$.

Then $\alpha$ has characteristic polynomial $\alpha^3 + 3t \alpha + u = 0$, where the $\alpha^2$ term vanishes because $\tr \alpha = 0$, and $t$ is an integer because $\tr \alpha^2 = 6t'$ must be a multiple of $3$. We can now ``solve'' for $\alpha$ using the Tartaglia-Cardano formula:
\newcommand{\ga}{\sqrt[3]{\gamma}}
\newcommand{\gb}{\sqrt[3]{\bar{\gamma}}}
\begin{equation}\label{eq:Cardano}
  \alpha = \ga + \gb,
\end{equation}
where
\[
\gamma = \frac{-u + \sqrt{u^2 + 4t^3}}{2}
  \quad \text{and} \quad \bar\gamma = \frac{-u - \sqrt{u^2 + 4t^3}}{2}.
\]
If $C$ admits an embedding into $\CC$, this is literally true, provided that we choose the cube roots such that their product is $t$. In general, we can interpret the expression as follows. First note that the polynomial $x^3 + 3tx + u$ has discriminant $-27(u^2 + 4t^3)$, whence
\[
  -27(u^2 + 4t^3) = \Disc \ZZ[\alpha] = [C : \ZZ[\alpha]]^2 \cdot \Disc C = \phi(\alpha)^2 \cdot (-27\Delta),
\]
where $\phi(\xi) = \frac{1}{\omega_C} 1 \wedge \xi \wedge \xi^2$ is the Levi form of $C$. Thus we can view $\sqrt{u^2 + 4t^3} = \phi(\alpha) \sqrt{\Delta}$, and hence $\gamma$ and $\bar{\gamma}$, as elements of the nondegenerate quadratic algebra $K_2 = \QQ[\sqrt{\Delta}]$ canonically associated to $C$. Then in the sextic algebra $K_6 = K_2[\ga]$, $\ga$ is invertible (because $\gamma\bar\gamma = t^3$ is invertible) and the element $\gb = t/\ga$ is a cube root of $\bar{\gamma}$. Then, by the usual derivation of the cubic formula, $\alpha \mapsto \ga + \gb$ identifies $C$ with a cubic subring of $K_6$.

We have
\[
  \left(\ga\right)^2 = \frac{1}{t} \left(\ga\right)^3 \gb
  = \frac{1}{t} \gamma \gb,
\]
so
\[
  \alpha^2 = \left(\ga\right)^2 + 2 \ga\gb + \left(\gb\right)^2 = 2t + \frac{1}{t}(\bar{\gamma} \ga + \gamma \gb),
\]
and since $[1,\alpha,\alpha^2]$ is a $\QQ$-basis of $C \tensor_\ZZ \QQ$, we see that
\[
  C \tensor_\ZZ \QQ \cong \QQ \oplus \{\xi \ga + \bar{\xi} \gb \mid \xi \in K_2\}
\]
and hence
\begin{equation}\label{eq:Card}
  C = \ZZ \oplus C^0 \cong \ZZ \oplus \{\xi \ga + \bar{\xi} \gb \mid \xi \in I\}
\end{equation}
for some lattice $I \subset K$. For brevity we write $c(\xi) = \xi \ga + \bar{\xi} \gb$, so $c : K_2 \to C^0 \tensor \QQ$ is an isomorphism of $\QQ$-vector spaces.

Note that $1$ is a primitive vector in $I$, so $I$ has a basis $[1,\tau]$ where
\[
  \tau = \frac{s+\sqrt{\Delta}}{q}
\]
for some $s,q \in \QQ$, and $C$ has a basis $[1, c(1), c(\tau)] = [1, \alpha, c(\tau)]$. Let us choose the sign of $\tau$ such that the distinguished generator $1 \wedge \alpha \wedge c(\tau)$ of $\Lambda^3 C$ is the given $\omega_C$. Then
\begin{align*}
  \phi(\alpha) &= \frac{1}{\omega_C} 1 \wedge \alpha \wedge \alpha^2 \\
  &= \frac{1}{\omega_C} 1 \wedge \alpha \wedge \bigg(2t + c\Big(\frac{\bar\gamma}{t}\Big)\bigg) \\
  &= \frac{1}{\omega_C} 1 \wedge \alpha \wedge c\bigg(\frac{-\sqrt{u^2 + 4t^3}}{2t}\bigg) \\
  &= \frac{1 \wedge \alpha \wedge \frac{-\phi(\alpha) c(\sqrt{\Delta})}{2t}}{1 \wedge \alpha \wedge \frac{c(\sqrt{\Delta})}{q}} \\
  &= \frac{-q \phi(\alpha)}{2t},
\end{align*}
that is, $q = -2t$.

The multiplication law on $C$ is given by
\begin{equation}\label{eq:mult law}
  c(\xi)c(\eta) = t(\xi\bar{\eta} + \bar{\xi}\eta) + c\left(\frac{\bar{\gamma}\bar{\xi}\bar{\eta}}{t}\right);
\end{equation}
hence the conditions for $C$ to be a ring are that
\begin{gather}
t(\xi\bar{\eta} + \bar{\xi}\eta) \in \ZZ \label{eq:Z cond}
\intertext{and}
\frac{1}{t}\bar{\xi}\bar{\eta}\bar{\gamma} \in I \label{eq:I cond}
\end{gather}
for all $\xi,\eta \in I$. Plugging $\xi = 1$, $\eta = \tau$ in \eqref{eq:Z cond} yields $s \in \ZZ$; plugging $\xi = \eta = \tau$ in \eqref{eq:Z cond} yields
\[
  r := 2t\left(\frac{s + \sqrt{\Delta}}{-2t}\right)\left(\frac{s - \sqrt{\Delta}}{-2t}\right)
  = \frac{s^2 - \Delta}{2t} \in \ZZ.
\]
Consequently $s \equiv \Delta$ mod $2$, and the multiplier $(s - \Delta)/2$, which generates the order $\OO_\Delta$, takes $\tau$ to an integer $-r$. This shows that $I$ is an ideal of $\OO_\Delta$.

Condition \eqref{eq:balanced norm} is immediate, as $I$ has norm $1/|t|$.

We must now prove \eqref{eq:balanced contain}, namely that $\gamma I^3 \subseteq \OO_\Delta$. Since $\gamma I^2 \subseteq t \bar{I}$ by \eqref{eq:I cond}, it suffices to prove that $t I \bar{I} \subseteq \OO_\Delta$. But using the known $\ZZ$-basis,
\begin{align*}
  t I \bar{I} &= \< t, t \tau, t\bar{\tau}, t \tau \bar{\tau} \> \\
  &= \< t, \frac{s + \sqrt{\Delta}}{-2}, \frac{s - \sqrt{\Delta}}{-2}, \frac{s^2 - \Delta}{2t} \> \\
  &= \< t, s, r, \frac{s + \sqrt{\Delta}}{2} \>
\end{align*}
which clearly lies in $\OO_\Delta$. This completes the construction of a self-balanced triple $(\OO_\Delta, I, \gamma)$ corresponding to $C$.

Conversely, given a self-balanced triple $(\OO_\Delta, I, \gamma)$, we scale $I$ so that it contains $1$ as a primitive element (and scale $\gamma$ appropriately). Let $t \in \ZZ$ be determined by $N(I) = 1/|t|$ and $\sgn t = \sgn N(\gamma)$. Then $I$ has a basis
\[
  \left[1, \tau = \frac{s + \sqrt{\Delta}}{-2t}\right]
\]
for some $s \in \ZZ$ of the same parity as $\Delta$. We get from \eqref{eq:Card} a cubic ring $C$ with a distinguished element $\alpha = c(1)$ for which the foregoing process returns the given triple $(\OO_\Delta, I, \gamma)$, if we can prove that \eqref{eq:Z cond} and \eqref{eq:I cond} hold. The verification of \eqref{eq:Z cond} is a routine check on basis elements. For \eqref{eq:I cond}, it is convenient to use the identity
\[
  \frac{\xi \wedge \eta}{1 \wedge \tau} = \frac{\bar \xi \eta - \xi \bar \eta}{\tau - \bar \tau},
\]
which may be proved merely by noting that $\bar \xi \eta - \xi \bar \eta$ is a $\QQ$-linear, $\QQ \cdot \sqrt{\Delta}$-valued alternating $2$-form on $K_2$. Note that an element $\xi \in K_2$ belongs to $I$ if and only if $\xi \wedge \eta \in \Lambda^2 I$ for every $\eta \in I$. Now for every $\xi, \eta, \zeta \in I$,
\[
  \frac{\frac{\bar\gamma\bar{\xi}\bar{\eta}}{t} \wedge \zeta}{1 \wedge \tau}
  = \frac{\gamma\xi \eta \zeta - \bar{\gamma \xi \eta \zeta}}{t(\tau - \bar{\tau})}
  = \frac{\gamma\xi \eta \zeta - \bar{\gamma \xi \eta \zeta}}{-\sqrt{\Delta}} \in \ZZ
\]
since $\gamma \xi \eta \zeta \in \OO_\Delta$, proving \eqref{eq:Z cond}.

To show that the $C$ corresponding to a self-balanced triple $(\OO_\Delta, I, \gamma)$ is unique, it suffices to express the Levi form of $C$ in terms of the triple, which is not difficult:
\begin{equation} \label{eq:phi from I}
\begin{aligned}
  \phi(c(\xi)) &= \frac{1}{\omega_C} 1 \wedge c(\xi) \wedge c(\xi^2) \\
  &= \frac{1}{\omega_C}1 \wedge c(\xi) \wedge c\left(\frac{\bar{\xi}^2\gamma}{t}\right) \\
  &= \frac{\xi \wedge \frac{\bar{\xi}^2\gamma}{t}}{1 \wedge \tau} \\
  &= \frac{\xi^3\gamma - \bar \xi^3\gamma}{\sqrt{\Delta}}.
\end{aligned}
\end{equation}

It remains to show that the choice of $\alpha$ made at the outset does not change the self-balanced triple derived, up to equivalence. Suppose $(\OO_\Delta, I, \gamma)$ and $(\OO_\Delta, I', \gamma')$ both arose from this method, which also provides identifications of oriented $\ZZ$-modules $c: I \to C^0$, $c': I'\to C^0$ and, in particular, an isomorphism $\psi = c'^{-1} \circ c : I \to I'$.

Here we use a trick inspired by the trace forms of \cite{GMS}: plugging $\eta = \xi$ into \eqref{eq:mult law}, we see that
\[
  \frac{1}{6} \tr(c(\xi)^2) = t\xi\bar{\xi} = t N(\xi).
\]
Thus $N(\psi(\xi))/N(\xi)$ is a constant $t'/t$ for all $\xi$ (where $t'$ is the 
value of $t$ corresponding to placing $\alpha'$ in place of $\alpha$). In particular, $\psi(1)$ is invertible in $K_2$, and the normalized map
\[
  \tilde{\psi}(\xi) = \frac{\psi(\xi)}{\psi(1)}
\]
extends linearly to a $\QQ$-linear self-map of $K_2$ that preserves $1$ and norms. There are only two such, the identity and conjugation, and the latter is ruled out by the fact that $\psi$ respects orientation. So $\psi$ is a scaling $\xi \mapsto \lambda \xi$ for all $\xi$, and using \eqref{eq:phi from I}, it is easy to see that $\gamma' = \lambda^{-3} \gamma$ so the two self-balanced triples are equivalent.

By the same argument, a nontrivial oriented automorphism of $C$ arises if and only if the associated balanced triple $(\OO, I, \gamma)$ is equivalent to itself via scaling by some multiplier $\lambda \neq 1$. To leave $\gamma$ fixed, we must have $\lambda^3 = 1$ and so $I$ is an ideal of the order $\ZZ[\omega]$. Since $\ZZ[\omega]$ is a PID, this implies that $I = \ZZ[\omega]$ up to scaling, as stated. Conversely, if $I = \ZZ[\omega]$, the map $c(\xi) \mapsto c(\omega \xi)$ clearly defines a nontrivial automorphism of $C$. \end{proof}

Here ends our proof of Bhargava's Theorem 13, but for our purposes, a slightly transformed description of the parametrization is preferable. The ideal $I$ may or may not be invertible in $\OO_\Delta$. Indeed, with respect to a basis
\[
  I = \< 1, \dfrac{s + \sqrt{\Delta}}{2t} \>,
\]
we found that
\[
  t I \bar{I} = \< t, s, r, \frac{s + \sqrt{\Delta}}{2} \>,
\]
where $r = (s^2 - \Delta)/(2t)$ is an integer. If $t$, $s$, and $r$ are relatively prime (incidentally, they are the coefficients of the \emph{quadratic form} $tx^2 + sxy + ry^2$ associated to the class of $I$), then $t I \bar{I} = \OO_\Delta$ and so $I$ is invertible. However, in general, there may be a common factor $g = \gcd(t,s,r)$, and then one verifies that $I$ is an ideal of the order $\OO_{\Delta'}$, $\Delta' = \Delta/g^2$, with inverse $\bar{I}/(gt)$. Note that $\gamma I^3$ is an $\OO_{\Delta'}$-ideal contained in $\OO_\Delta$. We need a little lemma about such ideals:
\begin{lem}
Let $\Delta \in \Discs$ and $g \geq 1$. An ideal $I$ of $\OO_\Delta$ that is contained in $\OO_{\Delta g^2}$ is actually contained in $g\OO_\Delta$.
\end{lem}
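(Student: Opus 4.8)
The plan is to reduce everything to explicit $\ZZ$-bases and to exploit the fact that $\OO_\Delta$ contains essentially one generator lying outside $\OO_{\Delta g^2}$. First I recall the identification given above: since $\OO_\Delta = \ZZ + f\OO_{\Delta_0}$ with $\Delta = \Delta_0 f^2$, we have $\OO_{\Delta g^2} = \ZZ + fg\,\OO_{\Delta_0} = \ZZ + g\OO_\Delta$. Writing $\OO_\Delta = \ZZ[\omega]$ with $\omega = (\Delta + \sqrt{\Delta})/2$, this produces the $\ZZ$-bases $\OO_{\Delta g^2} = \<1, g\omega\>$ and $g\OO_\Delta = \<g, g\omega\>$; in particular $g\OO_\Delta$ is precisely the sublattice of $\OO_{\Delta g^2}$ whose ``constant'' coordinate (the coefficient of $1$) is divisible by $g$.

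Next I take an arbitrary $x \in I$ and, using $I \subseteq \OO_{\Delta g^2}$, write $x = a + bg\omega$ with $a,b \in \ZZ$. The whole point is that $I$ is an $\OO_\Delta$-ideal, so $\omega x \in I \subseteq \OO_{\Delta g^2}$ as well. Using $\omega^2 = \Delta\omega - \tfrac{\Delta(\Delta-1)}{4}$ (the subtracted term being an integer because $\Delta \equiv 0,1 \pmod 4$), a one-line computation gives $\omega x = -\,bg\tfrac{\Delta(\Delta-1)}{4} + (a + bg\Delta)\omega$. Membership in $\OO_{\Delta g^2} = \<1, g\omega\>$ forces the $\omega$-coefficient $a + bg\Delta$ to be divisible by $g$, and hence $g \mid a$. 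By the previous paragraph this says exactly that $x \in g\OO_\Delta$, which completes the argument.

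I do not expect any genuine obstacle here; the content of the lemma is really that $g\OO_\Delta$ is the conductor of $\OO_{\Delta g^2}$ in $\OO_\Delta$. Indeed the argument can be packaged conceptually: for $x \in I$ one has $x\OO_\Delta \subseteq I \subseteq \OO_{\Delta g^2}$, so $x$ lies in the conductor $\{y \in \OO_\Delta : y\OO_\Delta \subseteq \OO_{\Delta g^2}\}$, and the only thing to verify is that this conductor equals $g\OO_\Delta$ — which is the same multiplication-by-$\omega$ computation. The single idea worth isolating is that testing against multiplication by $\omega$ (the generator of $\OO_\Delta$ absent from $\OO_{\Delta g^2}$) already pins down the constant coordinate modulo $g$, so no subtler case analysis is needed.
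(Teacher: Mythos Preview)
Your proof is correct and follows essentially the same approach as the paper: write an element of $I$ as $a + bg\omega$ and multiply by a generator of $\OO_\Delta$ (the paper uses the conjugate $\bar\xi$, you use $\omega$ itself) to force $g\mid a$ from membership in $\OO_{\Delta g^2}$. Your added remark identifying $g\OO_\Delta$ as the conductor of $\OO_{\Delta g^2}$ in $\OO_\Delta$ is a nice conceptual gloss, but the computational content is the same.
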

\begin{proof}
Let $\OO_\Delta = \ZZ[\xi]$, so $\OO_{\Delta g^2} = \ZZ[g \xi]$. Suppose $\eta = a + b g \xi \in \OO_{\Delta g^2}$ is an element of $I$. Then multiplying by the conjugate $\bar \xi = \tr \xi - \xi \in \OO_\Delta$, we get that
\[
  \bar{\xi}\eta = a \bar{\xi} + b g N(\xi) = [b g N(\xi) + a \tr \xi] - a \xi
\]
belongs to $I$, and hence to $\OO_{\Delta g^2}$. So $g|a$, and thus $\eta \in g\OO_\Delta$.
\end{proof}

 Thus
\[
  J = \frac{\gamma I^3}{g}
\]
is an invertible integral ideal of $\OO_{\Delta'}$ of norm
\begin{align*}
  N(J) &= \frac{\lvert N(\gamma) \rvert N_{\OO_{\Delta'}}(I)^3}{g^2} \\
  &= \frac{\lvert N(\gamma) \rvert \left(g \cdot N_{\OO_\Delta}(I)\right)^3}{g^2} \\
  &= g.
\end{align*}
Conversely, if $J$ is an invertible ideal of $\OO_{\Delta'}$ of norm $g$ whose class in $\Pic \OO_\Delta'$ is a cube (a clearly necessary condition), then $J$ will in general correspond to a number of self-balanced triples $(\OO_\Delta, I, \gamma)$, where $\Delta = \Delta' g^2$. There are $\lvert \Pic(\OO_\Delta)[3] \rvert$ possibilities for the class of $I$, and for each $I$, the value of $\gamma$ is determined only up to units, whereas we have $(\OO_\Delta, I, \gamma) \sim (\OO_\Delta, I, \gamma')$ only when $\gamma/\gamma'$ is the \emph{cube} of a unit, yielding a further $\lvert \OO_{\Delta'}^\cross / (\OO_{\Delta'}^\cross)^3 \rvert$ possibilities. An appeal to the structure of the unit groups of quadratic fields shows that
\[
  \lvert \OO_{\Delta'}^\cross / (\OO_{\Delta'}^\cross)^3 \rvert =
  \begin{cases}
    3 & \text{if $\Delta' = -3$ or $\Delta'$ is a positive non-square} \\
    1 & \text{otherwise.}
  \end{cases}
\]
The exception at $\Delta' = -3$ is welcome, since these are precisely the cases where we must count the corresponding rings with weight $1/3$ owing to the nontrivial automorphism. We also get exceptional behavior for $\Delta'$ a positive non-square, in other words, for $\Delta$ a positive non-square. We summarize our findings as follows.

\begin{thm}[\cite{Nakagawa}, Theorem 2.6 is the case $\Delta < 0$] \label{thm:RHS}
Let $w_\Delta = 3$ if $\Delta$ is a square, $1$ otherwise. Also let $\eta_\Delta = 1/3$ if $\Delta$ is positive, $1$ if $\Delta$ is negative. The following quantities are equal:
\begin{itemize}
  \item $2w_\Delta \eta_\Delta \hat h(\Delta)$;
  \item The number of invertible ideals $J$ of norm $g$ whose class is a cube in orders $\OO_{\Delta'}$ for integers $g>0$, $\Delta'$ satisfying $\Delta'g^2 = \Delta$, each counted with weight
  \[
    \lvert\Pic(\OO_{\Delta'})[3]\rvert.
  \]
\end{itemize}
\end{thm}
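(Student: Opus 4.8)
The plan is to repackage the fiber analysis just carried out into a single weighted count. By Theorem~\ref{thm:Bha} together with the orientation discussion preceding it, $2\hat h(\Delta)$ is the number of equivalence classes of self-balanced triples $(\OO_\Delta, I, \gamma)$, each weighted by the reciprocal of its number of \emph{oriented} automorphisms; and by the final paragraph of that proof, this weight is $1$ except that a class contributes $1/3$ precisely when it admits a nontrivial self-equivalence $\lambda \neq 1$. The strategy is to group these classes by their image under the map $(\OO_\Delta, I, \gamma) \mapsto J = \gamma I^3/g$ constructed above, where $g = \gcd(t,s,r)$ and $\Delta' = \Delta/g^2$, and to sum the automorphism weight over each fiber. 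As established, the images $J$ are exactly the invertible ideals of norm $g$ whose class is a cube in $\Pic \OO_{\Delta'}$, as $(g, \Delta')$ ranges over all factorizations $\Delta' g^2 = \Delta$.

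First I would record the fiber over a fixed admissible $J$. The class $[I] \in \Pic \OO_{\Delta'}$ can be any of the $\lvert \Pic(\OO_{\Delta'})[3]\rvert$ cube roots of $[J]$; for each such class every choice of representative ideal yields the same triple up to scaling, while the scalar $\gamma$ is then pinned down up to a unit, with the equivalence relation further collapsing $\gamma$ modulo cubes of units. Hence each fiber consists of exactly $\lvert \Pic(\OO_{\Delta'})[3]\rvert \cdot u_{\Delta'}$ equivalence classes, where $u_{\Delta'} = \lvert \OO_{\Delta'}^\times/(\OO_{\Delta'}^\times)^3\rvert$ is the quantity computed just above. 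Next I would fold in the automorphism weighting. By the classification of self-equivalences, a class carries a nontrivial automorphism only when $I$ is the unit ideal of $\ZZ[\omega]$, which---because $I$ is invertible in $\OO_{\Delta'}$---forces $\OO_{\Delta'} = \ZZ[\omega]$, i.e.\ $\Delta' = -3$; and in that case \emph{every} class over $J$ is fixed by $\lambda = \omega$, hence has oriented automorphism group of order $3$. Consequently the weighted fiber sum equals $\lvert \Pic(\OO_{\Delta'})[3]\rvert \cdot m_{\Delta'}$, where $m_{\Delta'} = u_{\Delta'}/3 = 1$ if $\Delta' = -3$ and $m_{\Delta'} = u_{\Delta'}$ otherwise; invoking the unit computation, $m_{\Delta'} = 3$ when $\Delta'$ is a positive non-square and $m_{\Delta'} = 1$ in every remaining case. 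Summing over all $J$ gives
\[
  2\hat h(\Delta) = \sum_{\substack{g > 0 \\ \Delta' g^2 = \Delta}} \#\bigl\{J \subseteq \OO_{\Delta'}\bigr\} \cdot \bigl\lvert \Pic(\OO_{\Delta'})[3]\bigr\rvert \cdot m_{\Delta'},
\]
where $\#\{J\}$ counts the admissible ideals in $\OO_{\Delta'}$.

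Finally I would show that $m_{\Delta'}$ is constant across the sum and absorb it into $w_\Delta \eta_\Delta$. Since $\Delta = \Delta' g^2$ with $g^2 > 0$, each $\Delta'$ has the same sign as $\Delta$; and since $g^2 \mid \Delta$ forces $g \mid \sqrt{\Delta}$ when $\Delta$ is a square, $\Delta'$ is a perfect square if and only if $\Delta$ is. Thus every term has $m_{\Delta'} = 3$ when $\Delta$ is a positive non-square and $m_{\Delta'} = 1$ otherwise, which matches $1/(w_\Delta \eta_\Delta)$ in each of the three regimes by a direct check against the definitions. Multiplying the displayed identity through by $w_\Delta \eta_\Delta$ therefore eliminates the constant factor and yields precisely the asserted equality. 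The one delicate point---and the real crux---is the interplay at $\Delta' = -3$: one must verify that the order-$3$ automorphism cancels the order-$3$ unit quotient exactly, so that $-3$ contributes like any other negative discriminant and $m_{\Delta'}$ stays uniform; everything else is bookkeeping built on the bijection already in hand.
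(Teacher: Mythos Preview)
Your proposal is correct and follows essentially the same route as the paper's own argument: both start from the bijection of Theorem~\ref{thm:Bha}, push each self-balanced triple to the invertible ideal $J = \gamma I^3/g$ in $\OO_{\Delta'}$, count the fiber as $\lvert\Pic(\OO_{\Delta'})[3]\rvert \cdot \lvert\OO_{\Delta'}^\times/(\OO_{\Delta'}^\times)^3\rvert$, observe that the automorphism weight $1/3$ exactly cancels the extra unit factor at $\Delta' = -3$, and finally note that the residual factor depends only on the sign and squareness of $\Delta$ (not of the individual $\Delta'$), which is precisely $1/(w_\Delta\eta_\Delta)$. Your packaging as an explicit fiber sum with the notation $m_{\Delta'}$ is slightly more formal than the paper's prose, but the substance is identical.
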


In \cite{Nakagawa}, a more computational approach is used that centers on the fact that the quadratic form $tx^2 + sxy + ry^2$ attached to $I$ is actually the \emph{Hessian} of the cubic form attached to $C$, that is, the determinant of second partial derivatives, up to scaling.

\section{Interlude: Links with class field theory}
We pause for a moment to consider how Theorem \ref{thm:main} transforms using the elementary tools developed so far, and how in certain special cases one is led to the founding concerns of class field theory. We already have Theorem \ref{thm:RHS}, which relates $\hat h(\Delta)$ to ideals in quadratic orders. Although it will not be used in the sequel, a comparable description of $h(\Delta)$ is not so hard to come by. For simplicity we treat only the case $3 \nmid \Delta$.
\begin{prop}
Let $3 \nmid \Delta$. To compute $6 w_{-3\Delta} \eta_{-3\Delta} h(\Delta)$, add the contributions to $\hat h(-27\Delta)$ in Theorem \ref{thm:RHS} for which $3 \nmid g$.
\end{prop}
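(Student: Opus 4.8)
The plan is to run $\hat h(-27\Delta)$ through Theorem~\ref{thm:RHS} and isolate the terms with $3 \nmid g$. Since $\QQ(\sqrt{-27\Delta}) = \QQ(\sqrt{-3\Delta})$, the theorem writes
\[
  2 w_{-3\Delta}\, \eta_{-3\Delta}\, \hat h(-27\Delta)
  = \sum_{\substack{\Delta' g^2 = -27\Delta \\ g > 0}}
    \lvert \Pic(\OO_{\Delta'})[3] \rvert \cdot
    \#\{\, J : N(J) = g,\ [J] \text{ a cube} \,\},
\]
a weighted count of cube-class ideals in the quadratic orders of $\QQ(\sqrt{-3\Delta})$. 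Because $3 \nmid \Delta$ we have $v_3(-27\Delta) = 3$, so $3 \nmid g$ forces $v_3(\Delta') = 3$; as $-3\Delta$ is already fundamental at $3$, this says exactly that $\OO_{\Delta'}$ is an order whose conductor is exactly divisible by $3$, and that $g$ is coprime to $3$. Isolating these terms is thus pure bookkeeping, and reduces the statement to identifying such ideal data with cubic rings of discriminant $\Delta$.

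For that identification I would reuse the machinery behind Theorem~\ref{thm:Bha}. A cubic ring of discriminant $\Delta$ carries a Hessian of its Levi form, a binary quadratic form of discriminant $-3\Delta$, hence a quadratic order in $\QQ(\sqrt{-3\Delta})$ together with an ideal class; conversely the Tartaglia--Cardano construction \eqref{eq:Cardano} rebuilds the cubic ring from this data as a cube-root. The role of the hypothesis $3 \nmid \Delta$ is that the ring is then unramified at $3$, and the normalization by trace-divisible-by-$3$ elements forces the relevant order to be the conductor-$3$ order $\OO_{\Delta'}$ rather than the maximal order of $\QQ(\sqrt{-3\Delta})$ --- precisely the constraint $3 \nmid g$ found above. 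I would arrange the dictionary so that the conductor $f$ of a cubic ring $C$ of discriminant $\Delta = \Delta_0 f^2$ inside its maximal order matches the ideal norm $g$, with the maximal cubic order itself (of fundamental discriminant $\Delta_0$) corresponding through its Hessian to a cube class in $\OO_{\Delta'}$, weighted by $\lvert \Pic(\OO_{\Delta'})[3]\rvert$.

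The constants are the delicate part. The factors $w_{-3\Delta}$ and $\eta_{-3\Delta}$ agree on both sides immediately, since $-3\Delta$ and $-27\Delta$ share their sign and their squareness. The leading coefficient, however, must grow from the $2$ of Theorem~\ref{thm:RHS} to the $6$ of the present statement; I expect $6 = 2 \cdot 3$, the $2$ being the orientation factor inherited from Theorem~\ref{thm:RHS} and the extra $3$ being a Scholz-type reflection factor comparing the maximal order $\OO_{-3\Delta}$, over which the cubic field's quadratic resolvent lives, with the conductor-$3$ order $\OO_{\Delta'}$ appearing here. To handle non-maximal $C$ I would invoke the subring recursion of Section~\ref{sec:recursions}, whose seed values depend only on splitting type, to reduce to maximal cubic orders, where the Hessian/Cardano correspondence can be verified splitting-type by splitting-type.

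The main obstacle is exactly this reflection identification together with its constant: proving, compatibly with the Bhargava parametrization, that cubic rings of discriminant $\Delta$ match the cube-class ideals of conductor-$3$ orders in $\QQ(\sqrt{-3\Delta})$ with weight $\lvert \Pic(\OO_{\Delta'})[3]\rvert$, and checking that the orientation $2$ and the reflection $3$ combine to exactly $6$ with no stray power of $3$ slipping in through the special treatment of the prime $3$.
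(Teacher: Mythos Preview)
Your proposal is a plan rather than a proof, and you identify the main gap yourself: you never actually establish the dictionary between cubic rings of discriminant $\Delta$ and cube-class ideals in conductor-$3$ orders, nor do you pin down the factor of $3$. The paper closes both gaps with a single concrete trick that you miss entirely: multiplying the Levi form by $3$ sends any cubic ring $C$ of discriminant $\Delta$ to the \zmat{} ring $\ZZ + 3C$ of discriminant $81\Delta = -27(-3\Delta)$, and the image is characterised among \zmat{} forms by the condition $3\mid a$, $3\mid d$. Pushed through Theorem~\ref{thm:Bha}, this becomes the condition $\gamma\alpha^3 \in \OO_{-27\Delta}$ for all $\alpha \in I$ on balanced triples $(\OO_{-3\Delta}, I, \gamma)$.

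The factor of $3$ is then \emph{not} a Scholz-type reflection at all. It is the observation that each such triple $(\OO_{-3\Delta}, I, \gamma)$ has exactly four index-$3$ sublattices $I' \subset I$; one of them is $\pp I$ (where $3 = \pp^2$ in $\OO_{-3\Delta}$), and the other three are precisely the $\OO_{-27\Delta}$-ideals that are not $\OO_{-3\Delta}$-ideals, i.e.\ the ones with $3\nmid g$. Checking that these three $I'$ give balanced triples $(\OO_{-27\Delta}, I', \gamma)$, and conversely that any such triple arises this way from $I = I'\OO_{-3\Delta}$, is a direct lattice computation. Your proposed detour through the subring recursion and a splitting-type case analysis is unnecessary, and your guess about the constant would have sent you looking for structure that is not there.
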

\begin{proof}
We can make any cubic form \zmat{} by multiplying it by $3$, that is, passing from the associated cubic ring $C$ to the subring $\ZZ + 3C$. We now want to count \zmat{} cubic forms of discriminant $81\Delta$ satisfying the additional condition $3|a$, $3|d$. Following this condition through the bijection of Theorem \ref{thm:Bha} shows that $6 w_{-3\Delta} h(\Delta)$ is the number of inequivalent balanced triples $(\OO_{-3\Delta}, I, \gamma)$ such that
\begin{equation} \label{eq:ga3}
  \gamma \alpha^3 \in \OO_{-27\Delta}
\end{equation}
for each $\alpha \in I$.

Suppose $(\OO_{-27\Delta}, I', \gamma)$ is a balanced triple such that $I'$ is \emph{not} an ideal of $\OO_{-3\Delta}$, that is, the corresponding $J$ in Theorem \ref{thm:RHS} has $3 \nmid g$. Then $I = I'\OO_{-3\Delta}$ is an ideal of index $3$ over $I$. The triple $(\OO_{-3\Delta}, I, \gamma)$ is clearly balanced, and any element $\alpha \in I$ can be written as $\kappa + \lambda\xi$, where $\kappa, \lambda \in I'$ and $\xi = \frac{-3\Delta + \sqrt{-3\Delta}}{2}$ is a generator of $\OO_{-3\Delta}$; one checks that $\xi^3 \in \OO_{-27\Delta}$, and thus
\[
  \alpha^3 = \kappa^3 + 3(\kappa^2 \lambda \xi + \kappa \lambda^2 \xi^2) + \xi^3 \lambda^3 \in \OO_{-27\Delta},
\]
verifying \eqref{eq:ga3}. Conversely, if $(\OO_{-3\Delta}, I, \gamma)$ is balanced and satisfies \eqref{eq:ga3}, then $I$ has four sublattices $I'$ of index $3$, one of which is $\pp I$ (using that $3 = \pp^2$ ramifies in $\OO_{-3\Delta}$). The other three are ideals of $\OO_{-27\Delta}$ but not of $\OO_{-3\Delta}$. Thus they yield triples $(\OO_{-27\Delta}, I', \gamma)$ which are balanced since we can write $I' = 3I + \ZZ\alpha_0$ and get
\[
  \gamma I'^3 \subseteq \gamma\ZZ\alpha_0^3 + 3\gamma I I'^2 \subseteq \OO_{-27\Delta} + 3\OO_{-3\Delta} = \OO_{-27\Delta}.
\]
So we have a $3$-to-$1$ correspondence between the balanced triples involved, establishing the desired identity.
\end{proof}
We now present two examples showing the sorts of problems we encounter when tackling Theorem \ref{thm:main} with both sides interpreted in this way.
\begin{examp}
If $\Delta = \Delta_0$ is a fundamental discriminant, then only the terms with $g = 1$ count on either side, and Theorem \ref{thm:main} devolves into
\[
  \frac{\lvert \Pic(\OO_{-27\Delta})[3] \rvert}{\lvert \Pic(\OO_\Delta)[3] \rvert} = \begin{cases}
    3, & \Delta > 1 \\
    1, & \Delta \leq 1.
  \end{cases}
\]
Since there is a surjection $\Pic(\OO_{-27\Delta}) \to \Pic(\OO_{-3\Delta})$ whose kernel has size $1$ or $3$, we get a corollary concerning the class groups of quadratic number fields:
\[
  \frac{\lvert \Cl(\QQ(\sqrt{-3\Delta}))[3] \rvert}{\lvert \Cl(\QQ(\sqrt{\Delta}))[3] \rvert} = 
  \begin{cases}
    3 \text{ or } 1, & \Delta > 1 \\
    1 \text{ or } \frac{1}{3}, & \Delta < 0.
  \end{cases}
\]
This is the \emph{Scholz reflection principle,} proved by Scholz in 1932 as a stunning application of class field theory.
\end{examp}
\begin{examp}
For an example that does not require going into high quadratic number fields, take $\Delta = p^2q^2$, where $p$ and $q$ are primes with $p \equiv 1$ mod $3$, $q \equiv 2$ mod $3$. Then verifying Theorem \ref{thm:main} reduces to counting ideals of various norms in suborders of $\ZZ \cross \ZZ$ and $\ZZ[\omega]$ (where $\omega$ is a primitive cube root of unity) and checking the cubicality of their classes in the Picard group. We present the outcomes here.

For $2\hat h(p^2 q^2)$, we count:
\begin{itemize}
  \item ideals of norm $pq$ in $\ZZ \cross \ZZ$: $4$, with weight $1$.
  \item ideals of norm $1$ in $\ZZ + pq(\ZZ \cross \ZZ)$: $1$, with weight $3$.
  \item ideals of norm $p$ in $\ZZ + q(\ZZ \cross \ZZ)$: $2$, with weight $1$.
  \item ideals of norm $q$ in $\ZZ + p(\ZZ \cross \ZZ)$: here things become interesting. The Picard group is $(\ZZ/p\ZZ)^\cross$; there are are $2$ such ideals (with weight $3$) if $q$ is a cube mod $p$, and none otherwise.
\end{itemize}
For $6 h(p^2 q^2)$, we count:
\begin{itemize}
  \item ideals of norm $pq$ in $\ZZ[3\omega]$: none.
  \item ideals of norm $1$ in $\ZZ[3pq\omega]$: $1$, with weight $27$.
  \item ideals of norm $q$ in $\ZZ[3p\omega]$: none.
  \item ideals of norm $p$ in $\ZZ[3q\omega]$: here again things become interesting. There are exactly $2$ ideals of $\OO = \ZZ[3q\omega]$ of norm $p$, namely the intersections with $\OO$ of the two ideals $\ZZ[\omega]\alpha$, $\ZZ[\omega]\bar\alpha$ into which $p$ splits in $\ZZ[\omega]$. They are cubes in the class group $\Pic(\ZZ[3q\omega] \cong \FF_{q^2}^\cross/\FF_q^\cross \cross \ZZ/3\ZZ$ if and only if $\alpha$ (equivalently $\bar\alpha$) or one of its associates $\omega\alpha$, $\omega\alpha^2$ is a cube modulo $3q$ (or an integer times a cube, but all integers mod $3q$ are cubes). The mod $3$ condition requires we pick the unique associate (up to sign) with $\alpha \in \ZZ[3\omega]$, that is, $\alpha$ is \emph{primary} in the classical terminology; and then we get a contribution of $12$ or $0$ according as this $\alpha$ is a cube or not modulo $q$.
\end{itemize}
So verifying Theorem \ref{thm:main} in this case amounts to proving a case of cubic reciprocity: that $\alpha$ is a cube mod $q$ if and only if $q$ is a cube mod $\alpha$. Similar analysis of the case $p \equiv q \equiv 1$ mod $3$ forces us to invoke cubic reciprocity on two generic elements $\alpha, \beta \in \ZZ[3\omega]$. Although elementary proofs of cubic reciprocity are known, we can then proceed to the case $\Delta = -p^2q^2$, which leads us to an exotic cubic reciprocity law linking the fields $\QQ(i)$ and $\QQ(\sqrt{3})$. The quest to systematize such reciprocity laws was, of course, one of the founding aims of class field theory.
\end{examp}

\section{$h$ and class field theory}
\label{sec:CFT}
We now return to the general case and seek to interpret $h(\Delta)$ via class field theory. Consider first the most generic case, in which our given cubic ring $C$ sits in a cubic field $K_3$ which is not Galois over $\QQ$ (so $\Delta$ is not a square). Then the normal closure $K_6$ of $K_3$ is $S_3$-Galois; it contains a single quadratic subfield $K_2 = \QQ(\sqrt{\Delta})$ of discriminant $\Delta_0$, the fundamental discriminant arising from decomposing $\Delta = \Delta_0 d^2$. The key insight regarding this network of fields
\begin{equation}\label{eq:4 fields}
\xymatrix@dr{
K_6 \ar@{-}[r]\ar@{-}[d] & K_2 \ar@{-}[d] \\
K_3 \ar@{-}[r]
 & \QQ }
\end{equation}
is the following theorem of Hasse. Recall that the \emph{conductor} of an abelian extension $L/K$ of number fields is the minimal modulus that the Artin symbol $\big(\frac{\cdot}{L/K}\big)$ admits: it is a product of the ramified primes, appearing to exponents that may be computed using ramification groups.
\begin{lem}[\cite{Nakagawa}, Lemma 1.3; \cite{Hasse}] \label{lem:conductor}
The conductor of the extension $K_6/K_2$ is the principal ideal $(d) \subseteq \OO_{K_2}$.
\end{lem}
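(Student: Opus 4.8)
The plan is to realize $\mathfrak{f} := \cond(K_6/K_2)$ through the conductor--discriminant formalism and then pin down its exact value prime by prime. First I would record the group theory: since $K_3/\QQ$ is a non-Galois cubic field, $\Gal(K_6/\QQ)\cong S_3$, and $K_2$ is the fixed field of $A_3$, so $\Gal(K_6/K_2)\cong A_3\cong\ZZ/3\ZZ$ is cyclic cubic. Its two nontrivial characters $\chi,\chi^2$ are complex conjugates and hence share a common conductor, which is $\mathfrak{f}$; the conductor--discriminant formula then gives $\mathfrak{d}_{K_6/K_2}=\mathfrak{f}^2$. Because the entire diagram \eqref{eq:4 fields} is $\Gal(K_6/\QQ)$-stable, $\mathfrak{f}$ is invariant under $\Gal(K_2/\QQ)$; in particular conjugate primes of $\OO_{K_2}$ carry equal exponents in $\mathfrak{f}$.

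The global input is the norm of $\mathfrak{f}$. The cleanest route is inductivity of Artin conductors applied to $\rho=\mathrm{Ind}_{A_3}^{S_3}\chi$, the standard $2$-dimensional representation of $S_3$: since $\zeta_{K_3}=\zeta_\QQ\cdot L(s,\rho)$, one has $\mathfrak{f}(\rho)=(\Disc K_3)=(\Delta)$, while inductivity gives $\mathfrak{f}(\rho)=\disc(K_2/\QQ)\cdot N_{K_2/\QQ}(\mathfrak{f})=(\Delta_0)\,N_{K_2/\QQ}(\mathfrak{f})$. Comparing and using $\Delta=\Delta_0 d^2$ yields $N_{K_2/\QQ}(\mathfrak{f})=(d^2)$. (The same identity drops out of computing $\Disc K_6$ by the tower formula through $K_2$, namely $\Delta_0^3\,N(\mathfrak{f})^2$, against its value $\Delta_0\Delta^2$.) This already forces $\mathfrak{f}=(d)$ up to the ambiguity of distributing exponents among ramified and conjugate primes, so the remaining task is purely local: show $v_{\mathfrak{p}}(\mathfrak{f})=v_p(d)$ for every prime $\mathfrak{p}\mid p$ of $\OO_{K_2}$.

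For $p\neq 3$ the ramification of $K_6/\QQ$ at $p$ is tame, so I would argue through inertia groups and the splitting types tabulated in \eqref{eq:subring table}. If $p$ is unramified in $K_3$ then $v_p(d)=0$ and $\mathfrak{f}$ is unramified at $p$. Splitting type $1^2 1$ gives $v_p(\Disc K_3)=1$, forcing $v_p(\Delta_0)=1$ and $v_p(d)=0$; here the inertia group at $p$ is generated by a transposition, so it ramifies $K_2/\QQ$ but, having order coprime to $3=|\Gal(K_6/K_2)|$, leaves $K_6/K_2$ unramified, whence $v_{\mathfrak{p}}(\mathfrak{f})=0=v_p(d)$. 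Type $1^3$ gives $v_p(\Disc K_3)=2$, so $v_p(\Delta_0)=0$ and $v_p(d)=1$; now inertia is a $3$-cycle lying in $A_3$, so $p$ is unramified in $K_2$ but tamely ramified in $K_6/K_2$ with conductor exponent exactly $1$ at each prime above it, giving $v_{\mathfrak{p}}(\mathfrak{f})=1=v_p(d)$ whether $p$ splits or is inert in $K_2$. The prime $p=2$ is still tame in the cubic and is handled by the same inertia analysis, now bookkeeping the possibilities $4\mid\Delta_0$ or $8\mid\Delta_0$.

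The genuine obstacle is $p=3$, where ramification in $K_6/K_2$ can be wild and the conductor exponent is no longer read off from the inertia order alone. Here I would compute the higher ramification filtration of the cyclic cubic extension $K_6/K_2$ at the prime above $3$ directly, using that $\Disc K_3$ has $3$-adic valuation at most $5$ (the Dedekind--Hensel bound invoked in Section \ref{sec:recursions}) to limit the possible filtrations, and match the resulting conductor exponent against $v_3(d)=\tfrac12\big(v_3(\Delta)-v_3(\Delta_0)\big)$ case by case; alternatively one simply invokes Hasse's original local computation \cite{Hasse}. Assembling the local exponents and using the $\Gal(K_2/\QQ)$-stability from the first step then yields $\mathfrak{f}=d\,\OO_{K_2}=(d)$, as claimed. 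I expect essentially all the difficulty to be concentrated in this wild case at $3$.
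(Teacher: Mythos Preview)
Your approach is correct and genuinely different from the paper's, but you have underestimated the strength of your own global argument. Once you know that $\mathfrak{f}$ is $\Gal(K_2/\QQ)$-invariant and that $N_{K_2/\QQ}(\mathfrak{f})=(d^2)$, there is in fact \emph{no} residual ambiguity: at a split prime $p=\pp\bar\pp$ the invariance forces equal exponents at $\pp$ and $\bar\pp$, so the norm pins each down as $v_p(d)$; at an inert prime there is a single prime of residue degree $2$, so the norm again gives exponent $v_p(d)$; and at a ramified prime $p\OO_{K_2}=\pp^2$ the norm of $\pp^a$ is $(p)^a$, forcing $a=2v_p(d)=v_\pp(d)$. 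So your inductivity computation together with the first paragraph already yields $\mathfrak{f}=(d)$ outright, and the entire local case analysis (including the worry at $p=3$) is unnecessary.

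The paper, by contrast, works purely locally: it proves $v_{\pp_6}(\dd_{K_6/K_2})=v_{\pp_2}(\Delta)-v_{\pp_2}(\Delta_0)$ directly by splitting into cases on the ramification type of $p$ in $K_3$, using multiplicativity of the different in the tower $K_6/K_3/\QQ$ versus $K_6/K_2/\QQ$. Your expectation that $p=3$ is the hard case is also misplaced from the paper's point of view: even there, the trick is to apply Dedekind's tame formula only to the degree-$2$ steps $K_6/K_3$ and $K_2/\QQ$ (which are tame whenever $p\neq 2$), while reading $v_{\pp_3}(\dd_{K_3/\QQ})$ straight off $\Delta$; this handles $p=3$ with no separate wild-ramification bookkeeping. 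The only prime requiring a bespoke argument in the paper is $p=2$, where the fully-ramified configuration is ruled out by a discriminant congruence. Your global route is shorter and avoids all of this; the paper's route is more hands-on but yields the finer local identity \eqref{eq:CFT dd and disc}, which it reuses in Proposition~\ref{prop:spl type}.
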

\begin{proof}
First, we are asserting that $K_6$ is unramified at infinity, which is automatic for a Galois extension of odd degree.

So fix a finite prime $\pp_2$ of $K_2$ lying above some prime $p \in \ZZ$. We would like to prove that the exponent $n_{\pp_2}$ of $\pp_2$ in the conductor is equal to
\[
  v_{\pp_2}(d) = \frac{1}{2}(v_{\pp_2}(\Delta) - v_{\pp_2}(\Delta_0)).
\]
Recall that, by class field theory, $n_{\pp_2}$ is the least nonnegative integer such that the upper ramification group $G^i(K_6/K_2, \pp_2)$ vanishes. Now since $K_6/K_2$ is of prime order $3$, the sequence of ramification groups must be of the simple form
\[
  G_i \cong \begin{cases}
    \ZZ/3\ZZ & \text{for $i < n$, where $n \geq 0$ is some integer} \\
    0 & \text{otherwise.}
  \end{cases}
\]
This implies that the upper ramification groups $G^i$ are exactly the same as the lower ones, and thus $n_{\pp_2} = n$. On the other hand, the ramification groups are connected with the different, $\dd_{K_6/K_2},$ by
\[
  v_{\pp_6}(\dd_{K_6/K_2}) = \sum_{i \geq 0} (|G_i| - 1) = 2n
\]
where $\pp_6$ is a prime of $K_6$ lying over $\pp_2$. Accordingly, it suffices to prove the identity
\begin{equation}\label{eq:CFT dd and disc}
  v_{\pp_6}(\dd_{K_6/K_2}) = v_{\pp_2}(\Delta) - v_{\pp_2}(\Delta_0).
\end{equation}
We consider the various ways that $p$ can ramify in $K_3$.
\begin{enumerate}
  \item\label{it:p unram} If $p$ is unramified in $K_3$, then $p \nmid \Delta$, so $p \nmid \Delta_0$ and $p$ is unramified in $K_6$. Thus \eqref{eq:CFT dd and disc} holds, as every term is $0$.
  \item\label{it:p part ram} Suppose $p$ is partially ramified in $K_3$, that is, $p = \pp_3^2\pp_3'$, where $\pp_3$ and $\pp_3'$ are distinct primes of $K_3$. Then $\pp_3$ must split and $\pp_3' = \pp_6^2$ must ramify in $K_6$. In particular, $K_6/K_2$ is unramified at $\pp_2$ (otherwise the ramification index of $\pp_6$ over $p$ would be divisible by $3$), so the left side of \eqref{eq:CFT dd and disc} is zero. Now consider the completed algebra
\[
  (K_3)_p = K_3 \tensor_\ZZ \ZZ_p.
\]
In view of the splitting of $p$, we must have $(K_3)_p \cong \ZZ_p \cross \Gamma$, where $\Gamma/\ZZ_p$ is a ramified quadratic extension. The discriminant of $\Gamma$ (which is well-defined up to multiplication by $(\ZZ_p^\cross)^2$) is
\[
  \disc \Gamma = \disc (K_3)_p = \disc K_3 = \Delta.
\]
This implies that $\Delta$ is a fundamental discriminant over $\ZZ_p$, that is to say $p^2 \nmid \Delta$ (if $p \neq 2$) or $\Delta \not\equiv 0,4$ mod $16$ (if $p = 2$). But $\Delta_0$ is also a fundamental discriminant over $\ZZ_p$, and $\Delta = \Delta_0 d^2$. We thus get $p \nmid d$ and $v_p(\Delta) = v_p(\Delta_0)$, proving \eqref{eq:CFT dd and disc}.
  \item\label{it:p tot ram} We are left with the case that $p = \pp_3^3$ is totally ramified in $K_3$. Note that $\pp_2$ must be totally ramified in $K_6$, as otherwise the ramification index of $\pp$ in $K_6$ would be at most $2$. Then the quadratic extensions $K_6/K_3$ and $K_2/\QQ$ must be of the same type at $\pp_3$ and $p$ respectively (both split, both inert, or both ramified). If both are unramified, then we easily get
  \begin{align*}
      v_{\pp_6}(\dd_{K_6/K_2})
    &= v_{\pp_6}(\dd_{K_6/\QQ})
    = v_{\pp_6}(\dd_{K_3/\QQ})
    = v_{\pp_3}(\dd_{K_3/\QQ})
    = v_{p}(\Delta)
    = v_{\pp_2}(\Delta) \\
    &= v_{\pp_2}(\Delta) - v_{\pp_2}(\Delta_0),
  \end{align*}
  as desired. This leaves the case where all the extensions in \eqref{eq:4 fields} are totally ramified. If $p \neq 2$, we apply Dedekind's theorem on the different, which states that for $L/K$ a totally tamely ramified degree-$e$ extension of local fields, $v_{L}(\dd_{L/K}) = e-1$:
  \begin{align*}
    v_{\pp_6}(\dd_{K_6/K_2})
    &= v_{\pp_6}(\dd_{K_6/K_3}) + v_{\pp_6}(\dd_{K_3/\QQ}) - v_{\pp_6}(\dd_{K_2/\QQ}) \\
    &= v_{\pp_6}(\dd_{K_6/K_3}) + 2v_{\pp_3}(\dd_{K_3/\QQ}) - 3v_{\pp_2}(\dd_{K_2/\QQ}) \\
    &= 1 + 2v_p(\Delta) - 3 \\
    &= 2v_p(\Delta) - 2 \\
    &= 2v_p(\Delta) - 2v_p(\Delta_0) \\
    &= v_{\pp_2}(\Delta) - v_{\pp_2}(\Delta_0).
  \end{align*}
  If $p = 2$, approaching the proof along similar lines leads to difficulties. In fact, this case cannot occur. The totally ramified extension of local fields $(K_3)_2/\QQ_2$ must have a primitive element satisfying an Eisenstein polynomial $x^3 + bx^2 + cx + d$, where $b,c,d$ are even and $4 \nmid d$. Computing the discriminant of $(K_3)_2$ via \eqref{eq:discriminant}, we notice that all terms are divisible by $16$ except for $-27a^2d^2$, which equals $4$ mod $16$. Accordingly $\Delta \equiv 4$ mod $16$ and $\Delta_0 \equiv 1$ mod $4$, contradicting the supposition that $2$ is ramified in $K_2$. \qedhere
\end{enumerate}
\end{proof}

Thus we have associated to each non-Galois cubic field extension of discriminant $\Delta = \Delta_0 m^2$ an Artin map
\begin{equation} \label{eq:Artin}
  \chi : I_{K_2}(m)/I_{K_2}(m,1) \twoheadrightarrow \mu_3
\end{equation}
from the ray class group mod $m$ onto a cyclic group of order $3$, uniquely defined up to sign. (Here, as usual, $I_{K_2}(m)$ denotes the ideals prime to $m$ and $I_{K_2}(m,1)$ the principal ideals generated by elements congruent to $1$ mod $(m)$.)

Conversely, given such a map $\chi$, class field theory gives a cyclic extension $K_6/K_2$ of conductor dividing $m$ of which it is the Artin map. However, not all of these extensions $K_6$ will be $S_3$-Galois over $\QQ$. The maps we want are those such that applying the nontrivial automorphism $x \mapsto \bar{x}$ of $\Gal(K_2/\QQ)$ interchanges the two nonzero elements of $\mu_3$, that is,
\begin{equation}\label{eq:psi antisymmetry}
  \chi(\bar{\aa}) = \chi(\aa)^{-1}.
\end{equation}
Then, by the uniqueness part of the Existence Theorem of class field theory, $K_6$ has an automorphism $\tau$ such that $\tau \sigma \tau^{-1} = \sigma^{-1}$ for all $\sigma \in \Gal(K_6/K_2)$; in other words, $K_6$ is $S_3$-Galois over $\QQ$. (If we imposed instead the condition $\chi(\bar{\aa}) = \chi(\aa)$, we would instead pick out $\ZZ/6\ZZ$-Galois fields.) Clearly \eqref{eq:psi antisymmetry} implies that $\chi$ vanishes on integers. Moreover, the converse is true: If $\aa \subseteq K_2$ is any integral ideal prime to $m$, we get
\[
  \chi(\bar{\aa}) = \chi(N(\aa))/\chi(\aa) = \chi(\aa)^{-1}.
\]
So we are only seeking Artin maps that factor through the quotient
\[
  I_{K_2}(m)/I_{K_2}(m,\ZZ)
\]
where $I_{K_2}(m,\ZZ)$ is the subgroup of principal ideals generated by an element congruent to some integer (necessarily coprime to $m$) modulo $(m)$. This is a familiar quotient group: it is the ring class group of the quadratic order $\OO_{\Delta} = \ZZ + m\OO_{\Delta_0}$ (\cite{Nakagawa}, Lemma 1.9).

Any $\chi : \Pic \OO_{\Delta} \twoheadrightarrow \mu_3$ yields an $S_3$-Galois field $K_6/\QQ$, and hence a non-Galois cubic field $K_3/\QQ$. The discriminant of $K_3$ will be $\Delta = \Delta_0 m^2$ unless $\chi$ vanishes on a larger subgroup $I_{K_2}(d,\ZZ) \intsec I_{K_2}(m)$, in which case $\chi$ has conductor $d$ (for the smallest such $d$) and $\Disc(K_3) = \Delta_0 d^2$.

Say that an integer $x$ \emph{squarely divides} an integer $y$ if $y/x$ is the square of an integer. We have just proved:

\begin{lem}\label{lem:fields and Pic}
If $\Delta = \Delta_0 m^2$ is a non-square integer, the Artin map provides a bijection between cubic fields whose discriminant squarely divides $\Delta$ and group epimorphisms
\[
  \chi : \Pic(\OO_\Delta) \twoheadrightarrow \mu_3
\]
up to sign.
\end{lem}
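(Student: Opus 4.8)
The plan is to assemble the correspondence built up in the preceding paragraphs into a genuine bijection; the only real work is to reconcile the conductor of $\chi$ with the discriminant exponent of $K_3$ and to track the sign ambiguity. First I would unwind the hypothesis. Since $K_2 = \QQ(\sqrt{\Delta}) = \QQ(\sqrt{\Delta_0})$ is forced to be the unique quadratic subfield of the normal closure $K_6$, any cubic field $K_3$ under consideration has discriminant of the form $\Delta_0 d^2$ for some integer $d \geq 1$. The requirement that $\Disc K_3 = \Delta_0 d^2$ squarely divide $\Delta = \Delta_0 m^2$ then says that $\Delta/\Disc K_3 = (m/d)^2$ is the square of an integer, which amounts to the single divisibility condition $d \mid m$.

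For the forward direction, given such a $K_3$, I would apply the construction \eqref{eq:Artin} with $m$ replaced by $d$ to obtain an Artin epimorphism $\chi_d \colon \Pic \OO_{\Delta_0 d^2} \twoheadrightarrow \mu_3$ of conductor exactly $d$, well defined up to sign; that the conductor is genuinely $d$ rather than a proper divisor is precisely the content of Lemma \ref{lem:conductor}. Because $d \mid m$, there is a canonical surjection $\Pic \OO_\Delta \twoheadrightarrow \Pic \OO_{\Delta_0 d^2}$ of ring class groups, and pulling $\chi_d$ back along it produces the desired epimorphism $\chi \colon \Pic \OO_\Delta \twoheadrightarrow \mu_3$.

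Conversely, any epimorphism $\chi \colon \Pic \OO_\Delta \twoheadrightarrow \mu_3$ automatically satisfies the antisymmetry \eqref{eq:psi antisymmetry}, as was verified above, so by the Existence and Uniqueness Theorems of class field theory it is the Artin map of a unique $S_3$-extension $K_6/\QQ$ and hence of a unique non-Galois cubic field $K_3$. Letting $d \mid m$ be the conductor of $\chi$, Lemma \ref{lem:conductor} gives $\Disc K_3 = \Delta_0 d^2$, which squarely divides $\Delta$. That the two passages are mutually inverse is then immediate: uniqueness in class field theory recovers $K_6$, and hence $K_3$, from $\chi$, while surjectivity of $\Pic \OO_\Delta \twoheadrightarrow \Pic \OO_{\Delta_0 d^2}$ shows that a character of conductor $d$ on $\Pic \OO_\Delta$ is determined by its descent to $\Pic \OO_{\Delta_0 d^2}$.

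The one point demanding care, and the main obstacle, is the conductor bookkeeping: one must know that the conductor of $\chi$, viewed as a character of $\Pic \OO_\Delta$, coincides with the integer $d$ for which $\Disc K_3 = \Delta_0 d^2$, and that it necessarily divides $m$. This is exactly what Lemma \ref{lem:conductor} supplies, by identifying the conductor of $K_6/K_2$ with the ideal $(d)$ and thereby matching it to the discriminant exponent. The residual ``up to sign'' is harmless: $\chi$ and $\chi^{-1}$ share the same kernel, hence cut out the same $K_6$ and the same $K_3$, so the fibers of the correspondence are precisely the pairs $\{\chi, \chi^{-1}\}$.
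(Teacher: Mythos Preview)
Your proposal is correct and follows essentially the same route as the paper: the lemma is stated immediately after the words ``We have just proved,'' and the preceding paragraphs contain precisely the forward and backward constructions you describe (Artin map via Lemma~\ref{lem:conductor} in one direction, the Existence Theorem plus the antisymmetry check \eqref{eq:psi antisymmetry} in the other). Your account is slightly more explicit than the paper's about the pullback along $\Pic\OO_\Delta \twoheadrightarrow \Pic\OO_{\Delta_0 d^2}$ and about why the conductor of $\chi$ matches the discriminant exponent, but the argument is the same.
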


\begin{rem}
In particular, we have shown that for any such $\chi$, the conductor $\cond(\chi)$ is a principal ideal generated by an integer. An elementary proof of this fact is also possible; the details are left to the reader.
\end{rem}

The case that $\Delta = m^2$ is a square, that is, $\Delta_0 = 1$, is similar but simpler, as we need only apply class field theory to the Galois extension $K_3/\QQ$ itself. The method of Lemma \ref{lem:conductor} shows that $K_3/\QQ$ has conductor $m$, yielding an Artin map
\[
  \chi_1 : I_{\QQ}(m)/I_{\QQ}(m,1) \twoheadrightarrow \mu_3.
\]
In the interest of conformity with the preceding, we use the bijection $\chi_1 \mapsto (\chi_1,\chi_1^{-1})$  to put these in bijection with maps
\[
  \chi : I_{\QQ \cross \QQ}(m) / I_{\QQ \cross \QQ}(m,\ZZ) = \Pic(\OO_{m^2}) \to \mu_3,
\]
yielding the following uniform parametrization.

\begin{lem}\label{lem:fields and Pic, genl}
Let $\Delta \in \Discs$, and let $\OO_\Delta$ denote the quadratic ring of discriminant $\Delta$. The Artin map provides a bijection between cubic fields whose discriminant squarely divides $\Delta$ and group epimorphisms
\[
  \chi : \Pic(\OO_\Delta) \twoheadrightarrow \mu_3
\]
up to sign.
\end{lem}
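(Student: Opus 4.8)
The plan is to reduce the statement to the two cases already prepared, splitting on whether $\Delta$ is a perfect square; the cases do not overlap. If $\Delta$ is a non-square and a cubic field has discriminant $\delta$ squarely dividing $\Delta$, then $\Delta/\delta$ is a square, so $\delta$ itself cannot be a square (else $\Delta$ would be), and the field is non-Galois. Conversely, if $\Delta = m^2$ is a square then any $\delta$ squarely dividing it is again a square, so only cyclic cubic fields contribute. Thus the fields appearing in Lemma~\ref{lem:fields and Pic, genl} are exactly the non-Galois cubic fields when $\Delta$ is a non-square and exactly the cyclic ones when $\Delta$ is a square.

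For $\Delta$ a non-square the assertion \emph{is} Lemma~\ref{lem:fields and Pic}, with $\OO_\Delta$ the quadratic order of discriminant $\Delta$; no new argument is needed, so all the work lies in the square case $\Delta = m^2$ (here $\Delta_0 = 1$ and $\OO_{m^2}$ is the conductor-$m$ order in $\QQ\cross\QQ$). There I would apply class field theory directly to the cyclic extension $K_3/\QQ$ instead of to a sextic field: rerunning the local computation of Lemma~\ref{lem:conductor} with $K_3/\QQ$ in the role of $K_6/K_2$ identifies the conductor of $K_3$ with the integer $f$ for which $\Disc K_3 = f^2$, so that ``$\Disc K_3$ squarely divides $m^2$'' becomes $f \mid m$. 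Class field theory then attaches to $K_3$ a surjection $\chi_1 : I_{\QQ}(m)/I_{\QQ}(m,1) \twoheadrightarrow \mu_3$, well defined up to the choice of generator of $\Gal(K_3/\QQ)$, i.e.\ up to inversion, which furnishes the ``up to sign.''

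It remains to transport the $\chi_1$ to characters of $\Pic(\OO_{m^2})$ by the doubling $\chi_1 \mapsto (\chi_1,\chi_1^{-1})$. Writing an ideal of $\QQ\cross\QQ$ prime to $m$ as a pair $(\aa_1,\aa_2)$ of ideals of $\ZZ$, set $\chi((\aa_1,\aa_2)) = \chi_1(\aa_1)\,\chi_1(\aa_2)^{-1}$. I would check that $\chi$ kills $I_{\QQ\cross\QQ}(m,\ZZ)$, since a generator $(x,y)$ with $x \equiv y \bmod m$ puts $\aa_1$ and $\aa_2$ in the same ray class; hence $\chi$ descends to $I_{\QQ\cross\QQ}(m)/I_{\QQ\cross\QQ}(m,\ZZ)$. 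Because $\chi((\aa,(1)))$ recovers $\chi_1$ and $\chi_1^{-1}$ maps to $\chi^{-1}$, the correspondence is a bijection onto all characters that respects surjectivity onto $\mu_3$ and the sign equivalence.

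The main obstacle is exactly this bookkeeping in the square case: making precise the split-algebra identity $\Pic(\OO_{m^2}) = I_{\QQ\cross\QQ}(m)/I_{\QQ\cross\QQ}(m,\ZZ)$ (the analogue of the ring-class-group description used when $\Delta$ is a non-square; cf.\ \cite{Nakagawa}, Lemma~1.9), and verifying that the conductor of the doubled character $\chi$ on $\Pic(\OO_{m^2})$ agrees with the conductor $f$ of $\chi_1$, so that $f \mid m$ matches ``$\Disc K_3$ squarely divides $m^2$.'' The one piece of genuine arithmetic depth, the conductor–discriminant relation, is already isolated in Lemma~\ref{lem:conductor}; what remains is only to confirm that the doubling is compatible with conductors and signs, yielding the single uniform bijection claimed.
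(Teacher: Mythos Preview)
Your proposal is correct and follows essentially the same route as the paper: split on whether $\Delta$ is a square, invoke Lemma~\ref{lem:fields and Pic} verbatim in the non-square case, and in the square case apply class field theory directly to the cyclic extension $K_3/\QQ$ and then pass to $\Pic(\OO_{m^2})$ via the doubling $\chi_1 \mapsto (\chi_1,\chi_1^{-1})$. You in fact supply more of the verification (that the doubled character kills $I_{\QQ\times\QQ}(m,\ZZ)$, that every character on $\Pic(\OO_{m^2})$ arises this way, and that conductors match) than the paper, which simply asserts the bijection in one line.
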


\subsection{Splitting types}
Suppose we wish to compute $h(\Delta)$ for some $\Delta = \Delta_0 m^2$. We can list all of the fields $K$ whose discriminant $\Delta_0 d^2$ squarely divides $\Delta$; then we must count orders in $K$ of index $m/d$. By Remark \ref{rem:subrings}, we can compute this knowing the splitting types of $K$ at each of the primes dividing $m/d$. The following proposition (which should also be credited to Hasse: see \cite{Hasse}, p{.} 568) gives a simple way to find these splitting types in terms of the corresponding Artin map $\chi$.

\begin{prop} \label{prop:spl type}
Let $\chi : \Pic(\OO_{\Delta_0 d^2}) \to \mu_3$ be a primitive character (i.e{.} one that does not factor through any $\Pic(\OO_{\Delta_0 d'^2})$, $d' | d$). Let $p \in \ZZ$ be a prime. The splitting type of $p$ in the $\QQ$-algebra $K$ corresponding via Theorem \ref{thm:LHS} to $\chi$ is
\begin{itemize}
  \item $1^3$ if $p|d$,
  \item $1^2 1$ if $p \nmid d$ but $p | \Delta_0$,
  \item $12$ if $p \nmid d$ and $p$ is inert in $\OO_{\Delta_0}$ (i.e{.} the Kronecker symbol $\left(\dfrac{\Delta_0 d^2}{p}\right)$ has the value $-1$),
  \item $1^3$ if $p \nmid d$ and $p = \pp \bar{\pp}$ in $\OO_{\Delta_0 d^2}$ with $\chi(\pp) = 1$;
  \item $3$ if $p \nmid d$ and $p = \pp \bar{\pp}$ in $\OO_{\Delta_0 d^2}$ with $\chi(\pp) \neq 1$.
\end{itemize}
In particular, all splitting types can be told apart merely by reference to the discriminant $\Delta = \Delta_0 d^2$, except for $111$ and $3$.
\end{prop}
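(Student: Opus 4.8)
The plan is to recover each splitting type from the conjugacy class of the Frobenius element $\Frob_p$ in $\Gal(K_6/\QQ) \cong S_3$, using the standard dictionary between its cycle type (on the three embeddings of $K_3$) and the factorization of $p$ in $K_3$, and to read off the ramified cases directly from the case analysis already performed in the proof of Lemma~\ref{lem:conductor}. Throughout, recall that $K$ corresponds to $\chi$ as in Lemma~\ref{lem:fields and Pic, genl}, that $d = \cond(\chi)$ because $\chi$ is primitive, and that for $p \nmid d$ the orders $\OO_{\Delta_0}$ and $\OO_{\Delta_0 d^2}$ have the same completion at $p$, so that $\left(\frac{\Delta_0 d^2}{p}\right) = \left(\frac{\Delta_0}{p}\right)$ records the splitting of $p$ in either.

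First I would dispose of the primes dividing $\Delta = \Delta_0 d^2$, which are exactly the ramified ones. The proof of Lemma~\ref{lem:conductor} shows that $K_6/K_2$ ramifies above $p$ precisely in the totally ramified case, where $p = \pp_3^3$ in $K_3$; since ramification of $K_6/K_2$ at $p$ is equivalent to $p \mid \cond(\chi) = d$, this gives splitting type $1^3$ whenever $p \mid d$, settling the first bullet. If instead $p \nmid d$ but $p \mid \Delta_0$, the same analysis places us in the partially ramified case, where $p = \pp_3^2 \pp_3'$ and the type is $1^2 1$; this settles the second bullet. No fresh computation is needed here.

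For the remaining primes $p \nmid \Delta$ (equivalently $p \nmid d$ and $p \nmid \Delta_0$), the extension $K_6/\QQ$ is unramified at $p$ and $\Frob_p$ is well defined up to conjugacy: the identity yields type $111$, a transposition yields $12$, and a $3$-cycle yields $3$. The image of $\Frob_p$ in $\Gal(K_2/\QQ) = S_3/A_3$ is the Frobenius of $p$ in $K_2$, so $\Frob_p$ is a transposition exactly when $p$ is inert in $K_2$, i.e.\ when $\left(\frac{\Delta_0}{p}\right) = -1$; this gives the $12$ bullet. When $p = \pp\bar\pp$ splits in $K_2$, the decomposition group at $p$ lies in $A_3 = \Gal(K_6/K_2)$, and because $\pp$ has residue degree $1$ over $p$ its Frobenius in the cyclic cubic extension $K_6/K_2$ is exactly $\Frob_p$ viewed inside $A_3$; by definition of the Artin map this element is $\chi(\pp)$. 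Hence $\chi(\pp) = 1$ forces $\Frob_p = e$ (type $111$) and $\chi(\pp) \neq 1$ forces $\Frob_p$ to be a $3$-cycle (type $3$), giving the final two bullets.

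The one delicate point --- and the main obstacle --- is this last identification of Frobenius under restriction: one must confirm that for a split prime the whole decomposition group at $p$ really descends into $A_3$ and is generated by precisely the element named by $\chi(\pp)$, so that ``$\chi(\pp)=1$'' and ``$p$ splits completely in $K_6$'' coincide. This is routine once residue degrees are tracked, but it is exactly where the arithmetic content of the Artin map enters. Finally, the Galois case $\Delta_0 = 1$ (so $K_2 = \QQ \cross \QQ$ and $K_6 = K_3$) runs identically, with $\Frob_p$ ranging over $\ZZ/3\ZZ$; here $\left(\frac{\Delta_0}{p}\right) = 1$ for every $p \nmid d$, so the inert case is vacuous and only $111$ and $3$ survive, matching the list.
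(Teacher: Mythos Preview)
Your argument is correct and follows essentially the same route as the paper: both invoke the case analysis of Lemma~\ref{lem:conductor} to separate the ramified types $1^3$ and $1^21$ according to whether $p\mid d$ or $p\mid\Delta_0$, and both read the unramified types off the conjugacy class of $\Frob_p$ in $S_3$, with the $111$/$3$ dichotomy for split $p$ decided by $\chi(\pp)$. The only cosmetic differences are that the paper argues the ramified cases in the contrapositive direction and treats the Galois case $\Delta_0=1$ first rather than last.
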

\begin{proof}
If $\Delta_0 = 1$, then $K$, being Galois, can only have splitting type $111$, $3$, or $1^3$, and it is easy to see that these cases occur exactly in the cases claimed. So assume that $\Delta_0 > 1$.

The primitivity of $\chi$ implies that
\[
  \Disc K = \Delta = \Delta_0 d^2.
\]
We immediately see that $K$ is ramified if and only if $p | \Delta$. If $K$ has splitting type $1^2 1$, then we are in case \ref{it:p part ram} of Lemma \ref{lem:conductor}, and we see that $p \nmid d$ (and hence that $p|\Delta_0$). If $K$ has splitting type $1^3$, then we are in case \ref{it:p tot ram} of Lemma \ref{lem:conductor}. By \eqref{eq:CFT dd and disc}, we have
\[
  2v_{\pp_2}(d) = v_{\pp_2}(\Delta) - v_{\pp_2}(\Delta_0) = v_{\pp_6}(\dd_{K_6/K_2}) > 0,
\]
since $K_6/K_2$ is totally ramified at $\pp_2$, and thus $p|d$.

In the case that $p$ is unramified, there are just three cases: the splitting types $111$, $12$, $3$ are also the cycle types of $\Frob_p$ as an element of $\Gal(K_6/\QQ) \cong S_3$. Note that cycle type $12$, being the only odd permutation, corresponds exactly to the case that the discriminant field $\QQ(\sqrt{\Delta})$ is inert at $p$. The other two cases can be told apart via class field theory: here $p$ splits as a product $p = \pp_2\bar{\pp_2}$ in $\OO_{\Delta_0}$ and hence as a product $\pp\bar\pp$ in $\OO_\Delta$ (as $p\nmid d$). The Artin symbol $\chi(\pp) = \chi(\pp_2)$ vanishes if and only if $\pp_2$ splits completely in $K_6$, which happens exactly when $p$ splits completely in $K_3$.
\end{proof}

In our computation of $h(\Delta)$, we are still missing the contribution of the nondomains, which are subrings of $K_r = \QQ \cross \OO_{\Delta_0}$ of index $m$. The splitting type of $K_r$ at every prime $p$ is either $111$, $12$, or $1^2 1$, and one finds that applying Proposition \ref{prop:spl type} to the trivial character $\chi = 1$, of conductor $d = 1$, yields the right answer. So it makes sense to define the Artin map of a nondomain to be identically $1$.

Note that $\QQ \cross \OO_{\Delta_0}$ has twice as many automorphisms as the fields whose discriminants squarely divide $\Delta$ ($6$ if $\Delta_0 = 1$, and $2$ otherwise). On the other hand, if we sum up over all maps $\chi : \Pic \OO_\Delta \to \mu_3$, then the Artin maps corresponding to fields $K$ appear twice, due to the sign ambiguity, but the trivial Artin map appears only once. So, counting the automorphisms carefully, we arrive at the following result.

\begin{thm}\label{thm:LHS}
Let $\Delta = \Delta_0 m^2 \in \Discs$. Let $w_\Delta = 3$ if $\Delta$ is a square, $1$ otherwise. The following quantities are equal:
\begin{itemize}
  \item $2 w_\Delta h(\Delta)$;
  \item The sum, over all characters $\chi : \Pic \OO_\Delta \to \mu_3$, of the number of subrings of index $m/\cond \chi$ in the cubic ring $C$ whose local splitting types are determined by $\Disc C = \Delta_0 (\cond \chi)^2$ and $\chi$.
\end{itemize}

\end{thm}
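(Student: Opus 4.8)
The plan is to compute $h(\Delta)$ by organizing all cubic rings of discriminant $\Delta$ according to their unique maximal overring. First I would invoke the automorphism-weighting identity already exploited in Section~\ref{sec:recursions}: since every automorphism of a subring $C \subseteq C_0$ extends to $C_0$, one has
\[
  \sum_{\substack{C \subseteq C_0 \\ \Disc C = \Delta}} \frac{1}{\lvert\Aut C\rvert} = \frac{\lvert\{C \subseteq C_0 : \Disc C = \Delta\}\rvert}{\lvert\Aut C_0\rvert}.
\]
Because every nondegenerate cubic ring embeds in a unique maximal cubic ring $C_0$, and passing to a subring of index $k$ multiplies the discriminant by $k^2$, a ring $C$ of discriminant $\Delta = \Delta_0 m^2$ sits inside a maximal $C_0$ of discriminant $\Delta_0 d^2$ (with $d \mid m$) as a subring of index $m/d$. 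Summing the weights over maximal overrings therefore gives
\[
  h(\Delta) = \sum_{C_0} \frac{\lvert\{C \subseteq C_0 : [C_0 : C] = m/d\}\rvert}{\lvert\Aut C_0\rvert},
\]
where $C_0$ ranges over the maximal cubic rings whose discriminant $\Delta_0 d^2$ squarely divides $\Delta$.

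Next I would use Lemma~\ref{lem:fields and Pic, genl} to identify these maximal overrings with characters. The cubic \emph{fields} $K$ whose discriminant squarely divides $\Delta$ correspond to epimorphisms $\chi : \Pic(\OO_\Delta) \twoheadrightarrow \mu_3$ up to sign, with $\cond\chi = d$ and $\Disc K = \Delta_0 (\cond\chi)^2$; the one remaining maximal ring, the nondomain $\QQ \cross \OO_{\Delta_0}$, is assigned the trivial character as in the discussion preceding the statement. For each such $C_0$ the subring count $\lvert\{C \subseteq C_0 : [C_0 : C] = m/d\}\rvert$ depends only on the splitting types of $C_0$ at the primes dividing $m/d$ (Remark~\ref{rem:subrings}), and Proposition~\ref{prop:spl type} shows these splitting types are precisely the ones determined by $\Disc C_0 = \Delta_0 (\cond\chi)^2$ together with $\chi$. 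Hence each maximal overring contributes exactly the subring count appearing in the second bulleted quantity.

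The heart of the argument, and the step I expect to demand the most care, is the automorphism bookkeeping that reconciles the weight $1/\lvert\Aut C_0\rvert$ with the constant $2w_\Delta$ and with the sign ambiguity in $\chi$. I would split into two cases. When $\Delta_0 \neq 1$ (so $w_\Delta = 1$): a non-Galois cubic field has $\lvert\Aut C_0\rvert = 1$ and is named by the two characters $\pm\chi$, so the prefactor $2$ exactly compensates the double-counting in the character sum, while the nondomain has $\lvert\Aut C_0\rvert = 2$ but only the single trivial character, and the prefactor $2$ again matches. When $\Delta_0 = 1$ (so $w_\Delta = 3$): a cyclic cubic field has $\lvert\Aut C_0\rvert = 3$ and two characters $\pm\chi$, and $2w_\Delta/\lvert\Aut C_0\rvert = 2$ equals its two character contributions; the nondomain $\QQ \cross \QQ \cross \QQ$ has $\lvert\Aut C_0\rvert = 6$ and one trivial character, and $2w_\Delta/\lvert\Aut C_0\rvert = 1$ matches. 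In every case $2w_\Delta$ times the contribution of $C_0$ to $h(\Delta)$ equals the sum of the corresponding character contributions, and summing over all $C_0$ yields the stated equality.
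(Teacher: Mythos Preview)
Your proposal is correct and follows essentially the same route as the paper: group cubic rings by their maximal overring, invoke Lemma~\ref{lem:fields and Pic, genl} and Proposition~\ref{prop:spl type} to match maximal overrings with characters and determine the subring counts via Remark~\ref{rem:subrings}, and then do the automorphism bookkeeping. Your case-by-case verification of the factor $2w_\Delta/\lvert\Aut C_0\rvert$ against the number of characters naming $C_0$ is in fact more explicit than the paper's, which simply notes that the nondomain has twice the automorphisms of the fields while being named by only one character rather than two.
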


\section{Finishing the proof of Theorem \ref{thm:main}}
\label{sec:finish}
To finish the proof for a value $\Delta = \Delta_0 m^2$, we would like to equate the expression for $2 w_\Delta h(\Delta)$ in Theorem \ref{thm:LHS} with the expression for $2 w_\Delta \eta_\Delta \hat h(\Delta)$ in Theorem \ref{thm:RHS}, which is reproduced here for convenience:

\begin{itemize}
  \item $2 w_\Delta \eta_\Delta \hat h(\Delta)$ is the number of invertible ideals $I$ of norm $g$ whose class is a cube in orders $\OO_{\Delta'}$ for integers $g>0$, $\Delta'$ satisfying $\Delta'g^2 = \Delta$, each counted with weight
  \[
    \lvert\Pic(\OO_{\Delta'})[3]\rvert.
  \]
\end{itemize}

It is not hard to turn $2 w_\Delta \eta_\Delta \hat h(\Delta)$ into a character sum, as follows. If $I$ is an invertible ideal in $\OO_{\Delta'}$, then
\[
  \sum_{\chi \in \Hom(\Pic \OO_{\Delta'}, \mu_3)} \chi(I) =
  \begin{cases}
    \lvert\Hom(\Pic \OO_{\Delta'}, \mu_3)\rvert
    = \lvert\Pic(\OO_{\Delta'})[3]\rvert &
    \text{if $I$ is of cubical class} \\
    0 & \text{otherwise.}
  \end{cases}
\]
So
\begin{align*}
  w_\Delta \eta_\Delta \hat h(\Delta) &= \sum_{c f = m}
  \sum_{\substack{I \subseteq \OO_{\Delta_0 c^2} \\ \text{invertible, norm $f$}}}
  \sum_{\chi : \Pic \OO_{\Delta_0 c^2} \to \mu_3} \chi(I) \\
  &= \sum_{\chi : \Pic \OO_{\Delta} \to \mu_3}
  \sum_{\substack{c f = m, \\ \cond(\chi) | c}}
  \sum_{\substack{I \subseteq \OO_{\Delta_0 c^2} \\ \text{invertible, norm $f$}}} \chi(I).
\end{align*}

It suffices to prove that, at least for cubefree $m$ (in view of Remark \ref{rem:recursion}), the contribution of each $\chi$ to $2 w_\Delta h(\Delta)$ and $2 w_\Delta \eta_\Delta \hat h(\Delta)$ is the same. In other words, fix a $\chi$; let its conductor be $c_1$, and let $\Delta_1 = \Delta_0c_1^2$, $m_1 = m/c_1$. We will prove that the number of subrings of the corresponding $\QQ$-algebra $K_\chi$ of index $m_1$ is equal to
\begin{equation} \label{eq:chi term}
  \sum_{c' f = m_1}
  \sum_{\substack{I \subseteq \OO_{\Delta_1 c'^2} \\ \text{invertible, norm $f$}}} \chi(I).
\end{equation}

We first observe that the number of subrings is a multiplicative function of $m_1$ and claim that \eqref{eq:chi term} is also. If $m_1 = m_2m_3$ with $\gcd(m_2,m_3) = 1$, then we get corresponding decompositions $c' = c'_2c'_3$ and $f = f_2f_3$. An invertible ideal $I$ of norm $f_2f_3$ in $\OO_{\Delta_0(c_0c'_2c'_3)^2}$ can be decomposed uniquely as a product $I_2I_3$, where $I_i$ is an invertible ideal of norm $f_i$; since $f_i$ is prime to $c'_{5-i}$, invertible ideals of norm $f_i$ in the orders $\OO_{\Delta_1(c'_2c'_3)^2}$ and $\OO_{\Delta_1{c'_i}^2}$ are in bijection.

Thus we can assume that $m_1 = p^k$ is a prime power. We once again have a local problem. There are several cases. The case $k=0$ is trivial, so we have $k=1$ or $k=2$. The following table shows the types of invertible ideals on which we must evaluate $\chi$ and sum:
\[
\begin{tabular}{c|ccc}
 & $c' = 1$ & $c' = p$ & $c' = p^2$ \\ \hline
 $k=1$ & norm $p$ in $\OO_{\Delta_1}$ & unit ideal in $\OO_{\Delta_1p^2}$ & \\
 $k=2$ & norm $p^2$ in $\OO_{\Delta_1}$ & [norm $p$ in $\OO_{\Delta_1p^2}$] & unit ideal in $\OO_{\Delta_1p^4}$
\end{tabular}
\]
The bottom middle entry has been placed in brackets because no such ideals exist. Suppose to the contrary that we had a map $\phi : \OO_{\Delta_1p^2} \to \FF_p$ with kernel an invertible ideal. Let $\xi$ be a generator of $\OO_{\Delta_1}$, so $\OO_{\Delta_1p^2} = \ZZ[p\xi]$. We have $\phi(p\xi)^2 = p \cdot \phi(p\xi^2) = 0$, so $\phi(p\xi) = 0$ and hence $\ker \phi = \ZZ\<p, p\xi\> = p \OO_{\Delta_1}$, which is not an invertible ideal.

If $p|c_1$, then $k=1$, and by the same argument, $\OO_{\Delta_1}$ has no invertible ideals of index $p$, so the value of \eqref{eq:chi term} is simply $1$, coming from the unit ideal in $\OO_{\Delta_1p^2}$. This accords with the number $1$ of subrings of index $p$ in a maximal ring of splitting type $1^3$, as tabulated in \eqref{eq:subring table}.

In the remaining cases, $p\nmid c_1$, so $p$ has the same splitting type in $\OO_{\Delta_1}$ as in $\OO_{\Delta_0}$. We only have to sum $\chi$ over ideals of norm $p^k$ in $\OO_{\Delta_1}$, all of which will be invertible, and add the contribution $1$ coming from the unit ideal in $\OO_{\Delta_1p^{2k}}$. 

If $p$ is inert, then $\OO_{\Delta_1}$ has no ideals of norm $p$ and one ideal of norm $p^2$, namely $(p)$, with $\chi((p)) = 1$. So the total \eqref{eq:chi term} is $1$ for $k = 1$ and $2$ for $k = 2$, in accordance with \eqref{eq:subring table} for $K_\chi$ having splitting type $12$.

If $p = \pp^2$ ramifies in $\OO_{\Delta_1}$, then $\OO_{\Delta}$ has one ideal each of norm $p$ and $p^2$. Note that $\chi(\pp) = 1$ since $\pp^2 = (p)$ is principal. So the total \eqref{eq:chi term} is $2$ for both $k=1$ and $k=2$, in accordance with \eqref{eq:subring table} for splitting type $1^21$.

Finally, if $p = \pp\bar\pp$ is split in $\OO_{\Delta_1}$, then $\OO_{\Delta_1}$ has two ideals of norm $p$ ($\pp$ and $\bar{\pp}$) and three ideals of norm $p^2$ ($\pp^2$, $\bar\pp^2$, and $\pp\bar\pp = (p)$). We know that $\chi(\bar\pp) = \chi(\pp)^{-1}$. Adding up $\chi$ on the relevant ideals in the two cases $\chi(\pp) = 1$, $\chi(\pp) \neq 1$ matches the four entries of \eqref{eq:subring table} for splitting types $111$ and $3$, finishing the proof.
\qed

\section{Acknowledgements}
I thank Manjul Bhargava for communicating this beautiful problem to me on a visit to Princeton (during which Bhargava was supposed to be presenting at Harvard, but his flight was felicitously canceled). I thank my fellow Cambridge students for encouraging me to propose the problem as an essay topic and write a Part III essay on it. I thank Jack Thorne for reading it as essay assessor.

\bibliography{../Master}
\bibliographystyle{plain}

\end{document}